\def\NAT@def@citea{\def\@citea{\NAT@separator}}
\theoremstyle{plain}
\newtheorem{theorem}{Theorem}[section]
\newtheorem{lemma}[theorem]{Lemma}
\newtheorem{corollary}[theorem]{Corollary}
\newtheorem{proposition}[theorem]{Proposition}
\theoremstyle{definition}
\newtheorem{definition}[theorem]{Definition}
\newtheorem{example}[theorem]{Example}
\theoremstyle{remark}
\DeclareMathOperator{\prox}{prox}
\DeclareMathOperator{\Fix}{Fix}
\begin{document}


\title{Degenerate preconditioned backward-backward splitting for inclusion problem}

\author{
\name{Pankaj Gautam\textsuperscript{a}\thanks{CONTACT Pankaj Gautam. Email: pgautam908@gmail.com} and V. Vetrivel\textsuperscript{b}}
\affil{\textsuperscript{a}Department of Mathematical Sciences, NTNU Trondheim, Norway; \\
\textsuperscript{b}Department of Mathematics, IIT Madras, India}
}

\maketitle

\begin{abstract}
In this work, we introduce the notion of warped Yosida regularization and study the asymptotic behaviour of the orbit of dynamical systems generated by warped Yosida regularization, which includes Douglas-Rachford dynamical system. We analyze an algorithm where the inclusion problem is first approximated by a regularized one and then the preconditioned regularization parameter is reduced to converge to a solution of original problem. We propose and investigate backward-backward splitting using degenerate preconditioning for monotone inclusion problems. The applications provide a tool for finding a minima of a preconditioned regularization of the sum of two convex functions. 
\end{abstract}

\begin{keywords}
Monotone operator; Yosida regularization; Preconditioning; Backward-backward splitting.
\end{keywords}

\section{Introduction}

In this work, we aim to solve the inclusion problem$\colon$
\begin{align}\label{1.1}
    \text{find}~(x,y)\in \mathcal{H}^2~\text{such that}~(0,0)\in (I- R+\lambda(M^{-1}A\times M^{-1}B))(x,y),
\end{align}
where $A,B:\mathcal{H}\to 2^\mathcal{H}$ are two set-valued operators, $R:\mathcal{H}^2\to \mathcal{H}^2$ is an operator defined by $(x,y)\mapsto (y,x)$, and $M:\mathcal{H}\to \mathcal{H}$ is linear, bounded, self-adjoint and positive-semidefinite operator, and $\mathcal{H}$ is a real Hilbert space. The dual inclusion problem of (\ref{1.1}) is 
\begin{align}\label{e1.2}
    \text{find}~(x^*,y^*)\in \mathcal{H}^2~\text{such that}~(0,0)\in ((I-R)^{-1}+(M^{-1}A\times M^{-1}B)^{-1}\circ (I/\lambda))(x^*,y^*).
\end{align}
The solution of problems (\ref{1.1}) and (\ref{e1.2}) are related to warped Yosida regularization of structured monotone inclusion problem of the form:
\begin{align}\label{1.3}
    \text{find}~x\in \mathcal{H}~\text{such that}~0\in (A+B)x. 
\end{align}
To find the solution of problem (\ref{1.3}), the Yosida regularization of problem (\ref{1.3}) is firstly introduced by Mahey and Tao \cite{mahey1993partial}, and Yosida regularization of an operator is introduced in \cite{moreau1965proximite, yosida2012functional}.\\
If $M=I$, (\ref{1.1}) becomes the problem of finding 
\begin{align}\label{1.4}
    (x,y)\in \mathcal{H}^2~\text{such that}~(0,0)\in I-R+\lambda (A\times B)(x,y),
\end{align} proposed in \cite{bauschke2005asymptotic}, in which the authors have shown the asymptotic behaviour of the sequences generated by composition of resolvents in the connection with the solution of (\ref{1.4}) and its dual by using Yosida regularization of problem (\ref{1.3}). The resolvent and Yosida regularization of a maximal monotone operator are very important tools in nonlinear analysis. Bui et al. \cite{bui2020warped} have proposed a generalized resolvent called warped resolvent by using an auxiliary operator. The main motivation to define warped resolvent is to construct and investigate splitting methods by different choices of kernels. Bredies et al. \cite{bredies2022degenerate} have analyzed Chambolle-Pock, Forward Douglas-Rachford and Peaceman-Rachford splitting methods with the help of warped resolvent by using linear kernel. 

In section \ref{sc2}, we introduce and investigate the properties of warped Yosida approximation.  In section \ref{sc3}, we analyze the existence, uniqueness, weak convergence of the dynamical systems governed by the warped Yosida regularization and observed Douglas-Rachford dynamical system as a particular case. Section \ref{sc4} describes the preconditioned Yosida regularization of the monotone inclusion problem and analyzes the preconditioned backward-backward splitting for problem (\ref{1.1}). Section \ref{sc5} provides backward-backward splitting methods for an optimization problem. 
\section{Preminilaries}
This section is devoted to some important definitions and results from nonlinear analysis and operator theory.
Throughout the paper, $\mathcal{G}(T)$ is used to denote the graph of the set-valued operator $T:\mathcal{H}\to 2^{\mathcal{H}} $, which is defined as $\mathcal{G}(T)$ =\{$(x,Tx):x\in D(T)$\}, where $D(T)$ denotes the domain of $T$. Let $Z(T)$ and $\Fix(T)$ denote the zero and set of fixed point of an operator $T$, respectively. Symbol $\mathbb{N}$ and $\mathbb{R}$ are used to denote the set of natural numbers and set of real numbers, respectively.
\begin{definition}
	Let $T: \mathcal{H} \to 2^\mathcal{H}$ be a set-valued operator. Then $T$ is said to be monotone if, $\forall$ $ x_1,x_2 \in \mathcal{H}$ and $u_1\in T(x_1), u_2 \in T(x_2)$,
	\begin{align*}
	0 \leq \langle x_1-x_2,u_1-u_2 \rangle.
	\end{align*}
	$T$ is said to be maximally monotone if there exists no monotone operator $S:\mathcal{H} \to 2^\mathcal{H}$ such that $\mathcal{G}(S)$    properly contains $\mathcal{G}(T)$, i.e., for every $(x_1,u_1)\in \mathcal{H} \times \mathcal{H}$,
	\begin{align*}
	(x_1,u_1) \in \mathcal{G}(T) \Leftrightarrow  \langle x_1-x_2, u_1-u_2 \rangle \geq 0, \   \forall(x_2,u_2) \in \mathcal{G}(T).
	\end{align*}
\end{definition}

Let $M$ be a bounded linear operator on $\mathcal{H}$. $M$ is said to be self-adjoint if $M^*= M$, where $M^*$ denotes the transpose conjugate of operator $M$.
A self-adjoint operator $M$ on $\mathcal{H}$ is said to be positive definite if $\langle M(x), x\rangle > 0$  for every nonzero $x \in \mathcal{H}$ (\cite{limaye1996functional}).
Define the $M$-inner product $\langle \cdot,\cdot \rangle_M $ on $\mathcal{H}$ by
$\langle x,y\rangle_M = \langle x,M(y)\rangle \text{ for all } x,y \in \mathcal{H}.$ The corresponding $M$-norm  is defined by $\|x\|^2_M=\langle x, Mx \rangle$ for all $x \in \mathcal{H}$.

\begin{definition} Let $D$ be a nonempty subset of  $H$, $T:D\rightarrow \mathcal{H}$ an operator and  $M:\mathcal{H} \to \mathcal{H}$   a positive definite operator. Then
	$T$ is said to be
	\begin{enumerate}
		\item [(i)] nonexpansive with respect to $M$-norm if
		$$\|Tx_1-Tx_2\|_M \leq \|x_1-x_2\|_M  ~ \forall x_1,x_2 \in \mathcal{H};$$
		\item [(ii)]  $M$-cocoercive    if
		\begin{equation*}\label{P2As1}
		\|Tx_1-Tx_2\|^2_{M^{-1}} \leq \langle x_1-x_2, Tx_1-Tx_2 \rangle,  \text{   for all $x_1, x_2 \in \mathcal{H}$}.
		\end{equation*}
	\end{enumerate}
\end{definition}
\begin{lemma}\label{2l1}\cite{bot2016second}
	Let $C$ be a nonempty subset of $\mathcal{H}$ and $x:[0,\infty)\to \mathcal{H}$ be a map. Assume that
	\begin{itemize}
		\item[(i)] $\lim\limits_{t\to\infty}\|x(t)-x^*\|$ exists, for every $x^*\in C$;
		\item[(ii)] every weak sequential cluster point of the map $x$ is in $C$.
	\end{itemize}
	Then there exists $x_\infty\in C$ such that $x(t)\rightharpoonup x_\infty$ as $t\to\infty$.
\end{lemma}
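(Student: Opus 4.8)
The plan is to establish this as a continuous-time version of Opial's lemma: I will show that the orbit is bounded, that it therefore possesses at least one weak sequential cluster point, that such a cluster point is unique, and finally that uniqueness of the cluster point of a bounded orbit forces weak convergence of the whole trajectory.

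First I would use that $C\neq\emptyset$ to fix some $x^*\in C$. By hypothesis (i) the function $t\mapsto\|x(t)-x^*\|$ converges as $t\to\infty$, hence is bounded on $[0,\infty)$, so the orbit $\{x(t):t\geq 0\}$ is bounded in $\mathcal{H}$. Since $\mathcal{H}$ is a Hilbert space, bounded sequences admit weakly convergent subsequences, so any $t_n\to\infty$ has a subsequence along which $x$ converges weakly. Thus the set of weak sequential cluster points of $x$ is nonempty, and by (ii) it is contained in $C$.

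The heart of the argument is uniqueness. Suppose $x_1$ and $x_2$ are weak sequential cluster points, so there are sequences $s_n\to\infty$ and $\tau_n\to\infty$ with $x(s_n)\rightharpoonup x_1$ and $x(\tau_n)\rightharpoonup x_2$; by (ii) both lie in $C$. Expanding the squared norms gives, for every $t$,
\begin{align*}
\|x(t)-x_1\|^2-\|x(t)-x_2\|^2 = 2\langle x(t),\,x_2-x_1\rangle + \|x_1\|^2-\|x_2\|^2.
\end{align*}
By (i) the limits $\lim_{t\to\infty}\|x(t)-x_1\|^2$ and $\lim_{t\to\infty}\|x(t)-x_2\|^2$ both exist, so the left-hand side converges, and hence $\langle x(t),x_2-x_1\rangle$ converges to some limit $L$ as $t\to\infty$. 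Evaluating this continuous limit along $s_n$ yields $L=\langle x_1,x_2-x_1\rangle$, while evaluating it along $\tau_n$ yields $L=\langle x_2,x_2-x_1\rangle$. Subtracting gives $\|x_1-x_2\|^2=0$, so $x_1=x_2$.

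Finally I would conclude. Write $x_\infty$ for the unique weak cluster point, which lies in $C$. If $x(t)\not\rightharpoonup x_\infty$, there would exist $z\in\mathcal{H}$, $\varepsilon>0$ and $t_n\to\infty$ with $|\langle x(t_n)-x_\infty,z\rangle|\geq\varepsilon$; extracting a weakly convergent subsequence of the bounded sequence $(x(t_n))$ and invoking uniqueness forces its weak limit to equal $x_\infty$, contradicting the lower bound. Hence $x(t)\rightharpoonup x_\infty$ as $t\to\infty$. I expect the uniqueness step to be the main obstacle, as it is the only place where hypotheses (i) and (ii) are combined; the delicate observation is that $\langle x(t),x_2-x_1\rangle$ converges as a full continuous limit, so its value may legitimately be computed along two distinct sequences tending to infinity.
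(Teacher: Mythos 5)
Your proof is correct. The paper does not prove this lemma at all---it is quoted directly from the cited reference \cite{bot2016second} as a known continuous-time analogue of Opial's lemma---and your argument (boundedness of the orbit from (i), existence of weak cluster points, uniqueness via the expansion $\|x(t)-x_1\|^2-\|x(t)-x_2\|^2 = 2\langle x(t),x_2-x_1\rangle+\|x_1\|^2-\|x_2\|^2$ evaluated along the two sequences, and a contradiction argument for full weak convergence) is precisely the standard proof found in that literature, so there is no substantive divergence to report.
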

Throughout the paper we consider operator $M:\mathcal{H}\to \mathcal{H}$ as a preconditioner, i.e., a linear, bounded, self-adjoint and positive semidefinite operator, and assume that $\mathcal{H}$ is a real Hilbert space with norm $\|\cdot\|$ and inner product $\langle\cdot, \cdot\rangle$.
\begin{definition}\cite{bredies2022degenerate} An admissible preconditioner for the operator $T:\mathcal{H} \to 2^\mathcal{H}$ is a linear, bounded, self-adjoint and positive semi-definite operator $M: \mathcal{H}\to \mathcal{H}$ such that warped resolvent 
\begin{align}\label{eq1.3}
    J_{\gamma T}^M = (M+\gamma T)^{-1}\circ M
\end{align}
is single-valued and has full domain. 
\end{definition}
It is easy to check that $J_{\gamma T}^M$ is neither everywhere defined $(J_{\gamma T}^M 0= \emptyset)$ nor single-valued. For this reason, instead of imposing the maximal monotonicity of $T$, we directly require (\ref{eq1.3}), which, in the context of splitting methods, is a reasonable assumption.
\begin{proposition}\cite{bredies2015preconditioned}\label{pro2.1}
Let $M:\mathcal{H}\to \mathcal{H}$ be an admissible preconditioning and $T:\mathcal{H}\to 2^{\mathcal{H}}$ be an operator such that $M^{-1}A$ is $M$-monotone. Then $J_{\gamma T}^M$ is $M$-firmly nonexpansive, i.e.,
\begin{align*}
    \|J_{\gamma T}^Mx-J_{\gamma T}^My\|_M^2+\|(I-J_{\gamma T}^M)x-(I-J_{\gamma T}^M)y\|_M^2\le \|x-y\|_M^2, ~\forall x,y\in \mathcal{H}.
\end{align*}
\end{proposition}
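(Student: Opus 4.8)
The plan is to prove that the warped resolvent $J_{\gamma T}^M = (M+\gamma T)^{-1}\circ M$ is $M$-firmly nonexpansive. The natural starting point is to pass to the defining inclusion. For fixed $x,y\in\mathcal{H}$, write $p = J_{\gamma T}^M x$ and $q = J_{\gamma T}^M y$; by definition of the warped resolvent these are characterized by $M x \in (M+\gamma T)p$ and $M y \in (M+\gamma T)q$, equivalently
\begin{align*}
    \frac{1}{\gamma}M(x-p) \in T(p), \qquad \frac{1}{\gamma}M(y-q) \in T(q).
\end{align*}
The single-valuedness and full domain guaranteed by admissibility ensure $p,q$ are well defined, so these inclusions make sense.

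First I would invoke the $M$-monotonicity hypothesis on $M^{-1}T$ (written as $M^{-1}A$ in the statement, but the relevant operator is $T$). The $M$-monotonicity says that for points in the graph, the pairing of increments with respect to $\langle\cdot,\cdot\rangle_M$ is nonnegative; applied to the two selections above, the key inequality becomes
\begin{align*}
    0 \le \left\langle p-q,\ \frac{1}{\gamma}M(x-p)-\frac{1}{\gamma}M(y-q)\right\rangle
    = \frac{1}{\gamma}\langle p-q,\ (x-p)-(y-q)\rangle_M,
\end{align*}
using that $\langle u, Mv\rangle = \langle u,v\rangle_M$. Multiplying through by $\gamma>0$, this yields $\langle p-q,\ (x-y)-(p-q)\rangle_M \ge 0$, i.e. the fundamental monotonicity estimate $\langle J^M_{\gamma T}x - J^M_{\gamma T}y,\ (x-y)-(J^M_{\gamma T}x-J^M_{\gamma T}y)\rangle_M \ge 0$.

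From here the proof is the standard firm-nonexpansiveness algebra, carried out entirely in the $M$-inner product. Setting $a = x-y$ and $b = p-q = J^M_{\gamma T}x - J^M_{\gamma T}y$, the inequality reads $\langle b, a-b\rangle_M \ge 0$. I would then expand the polarization-type identity
\begin{align*}
    \|a\|_M^2 = \|b\|_M^2 + \|a-b\|_M^2 + 2\langle b, a-b\rangle_M,
\end{align*}
which holds because $\|\cdot\|_M$ is the seminorm induced by the bilinear form $\langle\cdot,\cdot\rangle_M$. Since $2\langle b,a-b\rangle_M\ge 0$, this gives $\|b\|_M^2 + \|a-b\|_M^2 \le \|a\|_M^2$. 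Rewriting $a-b = (x-y)-(J^M_{\gamma T}x - J^M_{\gamma T}y) = (I-J^M_{\gamma T})x - (I-J^M_{\gamma T})y$ recovers exactly the claimed inequality.

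The only delicate point, and the step I expect to require the most care, is that $M$ is merely positive semidefinite rather than positive definite, so $\langle\cdot,\cdot\rangle_M$ is only a seminorm and $\|v\|_M = 0$ does not force $v=0$. One must therefore check that every manipulation uses only the bilinear-form and Cauchy--Schwarz properties that survive degeneracy, and never divides by an $M$-norm or appeals to definiteness; fortunately the polarization identity and the nonnegativity argument above are purely algebraic consequences of symmetry and positive semidefiniteness of $M$, so the degenerate case goes through verbatim. A minor notational care is also needed to reconcile the operator written as $A$ in the hypothesis with the operator $T$ appearing in the resolvent $J^M_{\gamma T}$.
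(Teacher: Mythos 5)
Your proof is correct: the paper itself gives no proof of this proposition (it is quoted directly from the cited reference \cite{bredies2015preconditioned}), and your argument — characterizing $p=J^M_{\gamma T}x$, $q=J^M_{\gamma T}y$ via $\tfrac{1}{\gamma}(x-p)\in M^{-1}T(p)$, $\tfrac{1}{\gamma}(y-q)\in M^{-1}T(q)$, applying the $M$-monotonicity hypothesis, and expanding $\|a\|_M^2=\|b\|_M^2+\|a-b\|_M^2+2\langle b,a-b\rangle_M$ — is exactly the standard argument used in that reference. Your two side remarks are also accurate: the hypothesis's ``$M^{-1}A$'' is a typo for $M^{-1}T$, and positive semidefiniteness of $M$ suffices since every step is purely algebraic in the bilinear form $\langle\cdot,\cdot\rangle_M$.
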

\section{Warped Yosida regularization}\label{sc2}
In this section, we define warped Yosida regularization and provide its properties and some characterizations.
\begin{definition}
	Let $X$ be a reflexive Banach space with dual space $X^*$. Assume that $(\emptyset \neq)C\subseteq X$, $M:C\to {X^*}$ and $T: X\to 2^{X^*}$ are such that $\operatorname{ran}(M)\subset \operatorname{ran} (T+\gamma M)$ and $T+\gamma M$ is injective. For any $\gamma\in (0,\infty)$, warped Yosida regularization of $T$ with kernel $M$ is defined by $T_{\gamma}^M= \frac{1}{\gamma}\left( M-M\circ J_{\gamma T}^M\right)$, where $J_{\gamma T}^M$ is the  warped resolvent of $T$ \cite{bui2020warped}.
\end{definition}
\begin{example}
	Let $C$ be a non-empty subset of $X$ and $\phi: X\to (-\infty,\infty ]$ be a proper convex lower semicontinuous map. Let $\gamma >0$. Assume that $M:C\to X^*$ is an operator with $\operatorname{ran}(M)\subset \operatorname{ran}(M+\gamma \partial \phi)$ and $M+\gamma\partial \phi$ is injective. Then warped Yosida regularization of $\partial \phi$ is $(\partial \phi)_\gamma^M= \frac{1}{\gamma}(M-M\circ \prox_{\gamma\partial\phi }^M)$, where $\prox_{\gamma\partial\phi }^M= (M+\gamma \partial \phi)^{-1}\circ M$.
	\newline
	Let $M$ be an injective operator, then warped Yosida regularization of $\partial \phi$ is described by the following variational inequality:
	\begin{align*}
		z= (\partial \phi)_{\gamma}^M\Leftrightarrow (\forall y\in X)~\langle y-x+\gamma M^{-1} z, \gamma z\rangle +\phi(x-\gamma M^{-1}z)\le \phi(z)~\forall (x,z)\in X\times X.
	\end{align*}
\end{example}
\begin{example}
	Let $T:X\to 2^{X^*}$ be a maximal monotone operator such that $Z(T)\neq \emptyset$. Suppose that $f:X\to (-\infty, \infty]$ is an admissible function such that $D(T)\subset int~D (f)$. Set $M= \nabla f$. Then $T_{\gamma}^{\nabla f}$ is a well defined warped Yosida regularization defined in \cite{reich2010two}. 
\end{example}
Now, we provide an example of warped Yosida regularization with respect to different choices of the admissible function $f$. 
\begin{example}
	Let $A:(0,\infty)\to \mathbb{R}$ be a monotone mapping. Define an admissible function (Boltzmann-Shannon entropy) $\mathcal{BS}:(0,\infty)\to (0,\infty)$ as $x\mapsto x \log x-x$. Then warped resolvent of $A$ is \cite{sabach2012iterative}
	$$ J_{A}^{\mathcal{BS}}x= (\log + A)^{-1}\circ Ax= xe^{Ax}$$
	and the warped Yosida regularization is
	\begin{align*}
		A^{\mathcal{BS}}x&= \nabla {\mathcal{BS}}x -\nabla \mathcal{BS}\circ J_{A}^{\mathcal{BS}}x\\
		&= \log x- \log (xe^{Ax})\\
		&= \log x- \log x- \log e^{Ax}\\
		&=-Ax.
	\end{align*}
\end{example} 

Now, we explore some properties and characteristics of warped Yosida regularization.
\begin{proposition}\label{prop1.1}
	Let $T:\mathcal{H}\to 2^{\mathcal{H}}$ and $M:\mathcal{H}\to \mathcal{H}$ be an admissible preconditioner. Then we have the following:
	\begin{itemize}
		\item[(i)] $\left( J_{\gamma T}^M(x), T_{\gamma}^M(x) \right)\in \mathcal{G}(T) $, $x\in \mathcal{H}$.
		\item [(ii)] $0\in T(x)$ if and only if $ 0\in T_{\gamma}^M(x)$, $x\in \mathcal{H}$.
        \item [(iii)] $T_{\gamma}^M= \left(\gamma M^{-1}+T^{-1} \right)^{-1}$.
		\item [(iv)] $T_{\gamma}^M= J_{\gamma^{-1}T^{-1}}^{M^{-1}}\circ \gamma^{-1} M$. 
		\item [(v)] $T_{\gamma+\lambda}^M= \left( T_{\gamma}^M \right)_{\lambda}^M$.
		\item [(vi)] Let $x,y\in\mathcal{H}$. Then $x= T_{\gamma}^My\Leftrightarrow ( y-\gamma M^{-1}x,x)\in \mathcal{G}(T)$.
        \item [(vii)] $\overline{D(T)}$ is convex, $ D(J_{\gamma T}^M)= D(T_\gamma^M)$ and $\operatorname{ran}(J_{\gamma T}^M)= D(T)$.
	\end{itemize}
\end{proposition}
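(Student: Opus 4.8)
The plan is to derive everything from one identity obtained by unwinding the definitions. Write $p = J_{\gamma T}^M(x) = (M+\gamma T)^{-1}(Mx)$; then $Mx \in (M+\gamma T)(p) = Mp + \gamma T(p)$, which rearranges to $\tfrac{1}{\gamma}(Mx-Mp) \in T(p)$. Since $T_{\gamma}^M(x) = \tfrac{1}{\gamma}(Mx - Mp)$ by definition, this says precisely that $T_{\gamma}^M(x) \in T(p)$ and that $Mx = Mp + \gamma\, T_{\gamma}^M(x)$. This key identity instantly gives (i), because $(J_{\gamma T}^M(x), T_{\gamma}^M(x)) = (p, T_{\gamma}^M(x)) \in \mathcal{G}(T)$; and it gives (vi), since for $x = T_{\gamma}^M(y)$ the relation $Mp = My - \gamma x$ reads $p = y - \gamma M^{-1}x$, whence $(y - \gamma M^{-1}x, x) = (p,x) \in \mathcal{G}(T)$ (interpreting $M^{-1}$ as the inverse relation, or as a genuine inverse when $M$ is positive definite).

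Next I would establish the resolvent identity (iii), from which (iv) and (v) fall out by pure operator algebra. To show $T_{\gamma}^M = (\gamma M^{-1}+T^{-1})^{-1}$ as relations, I argue both inclusions. For the forward inclusion, with $u = T_{\gamma}^M(x)$ and $p$ as above, take $w = p \in T^{-1}(u)$ and $v = \tfrac{1}{\gamma}(x-p)$; then $Mv = \tfrac{1}{\gamma}(Mx-Mp) = u$, so $v \in M^{-1}(u)$ and $x = \gamma v + w \in (\gamma M^{-1}+T^{-1})(u)$. For the reverse, if $x = \gamma v + w$ with $Mv = u$ and $u \in T(w)$, then $Mx = \gamma u + Mw \in (M+\gamma T)(w)$, so $w \in (M+\gamma T)^{-1}(Mx) = \{J_{\gamma T}^M(x)\}$ by single-valuedness, forcing $w = p$ and $u = \tfrac{1}{\gamma}(Mx-Mp) = T_{\gamma}^M(x)$. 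Given (iii), statement (v) follows from $(T_{\gamma}^M)^{-1} = \gamma M^{-1}+T^{-1}$ and the telescoping $(T_{\gamma}^M)_{\lambda}^M = (\lambda M^{-1}+\gamma M^{-1}+T^{-1})^{-1} = T_{\gamma+\lambda}^M$, while (iv) follows from the rescaling $(M^{-1}+\gamma^{-1}T^{-1}) = \gamma^{-1}(\gamma M^{-1}+T^{-1})$ inside the inverse composed with $\gamma^{-1}M$.

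For (ii), the forward direction is immediate: $0 \in T(x)$ gives $Mx \in (M+\gamma T)(x)$, hence $J_{\gamma T}^M(x) = x$ and $T_{\gamma}^M(x) = \tfrac{1}{\gamma}(Mx - Mx) = 0$. The converse I would read off (iii), since $0 \in T_{\gamma}^M(x)$ means $x \in (\gamma M^{-1}+T^{-1})(0) = \gamma\,\ker M + Z(T)$, which collapses to $x \in Z(T)$ exactly when $M$ is injective. For (vii), the equality $D(J_{\gamma T}^M) = D(T_{\gamma}^M)$ is automatic because $M$ is everywhere defined and $T_{\gamma}^M = \tfrac{1}{\gamma}(M - M\circ J_{\gamma T}^M)$; the inclusion $\operatorname{ran}(J_{\gamma T}^M) \subseteq D(T)$ is the first half of the key identity (as $T(p) \neq \emptyset$), and the reverse inclusion uses the admissibility range condition to realize each $p \in D(T)$ as $J_{\gamma T}^M(x)$ for some $x$ with $Mx \in Mp + \gamma T(p)$.

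I expect the convexity of $\overline{D(T)} = \overline{\operatorname{ran}(J_{\gamma T}^M)}$ to be the main obstacle, since $T$ is not assumed maximal monotone. The route I would take is to invoke Proposition \ref{pro2.1}: $J_{\gamma T}^M$ is $M$-firmly nonexpansive, hence firmly nonexpansive in the Hilbert space $(\mathcal{H}, \langle\cdot,\cdot\rangle_M)$ when $M$ is positive definite; a full-domain firmly nonexpansive operator is the resolvent of a unique maximal monotone operator, whose domain has convex closure, and convexity is insensitive to the equivalence between the $M$-norm and the original norm. The two delicate points throughout are precisely the places where $M$ merely positive semidefinite is insufficient, namely the converse of (ii) and the $M$-inner-product argument for convexity, both of which should be read under positive definiteness of $M$.
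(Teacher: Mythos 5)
Your proposal is correct and, on items (i), (iii), (iv) and (vi), it is essentially the paper's own proof: both rest on the single identity $Mx\in MJ_{\gamma T}^Mx+\gamma T(J_{\gamma T}^Mx)$, and both obtain (iv) from (iii) by the same rescaling. You diverge in three places, each to your advantage. For (v), the paper computes pointwise, passing through $x\in T(y-(\gamma+\lambda)M^{-1}x)$ and re-reading this as membership in $T_{\gamma}^M(y-\lambda M^{-1}x)$ (its displayed chain even contains typos, writing $M^{-1}y$ where $M^{-1}x$ is meant); your telescoping of inverse relations, $((T_{\gamma}^M)_{\lambda}^M)^{-1}=\lambda M^{-1}+\gamma M^{-1}+T^{-1}$, is cleaner, though note it tacitly requires (iii) to be applicable to $T_{\gamma}^M$ with the same kernel $M$, i.e.\ that $M$ is also admissible for $T_{\gamma}^M$. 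For (vii), the paper gives no proof at all---its argument simply stops after (vi)---so your Minty-type argument through Proposition \ref{pro2.1} (a full-domain, $M$-firmly nonexpansive map is the resolvent of a maximal monotone operator in $(\mathcal{H},\langle\cdot,\cdot\rangle_M)$) is a genuine addition, and it is the right mechanism for the convexity of $\overline{D(T)}$ given that $T$ is not assumed maximal monotone. Finally, your caveats about positive definiteness are accurate rather than pedantic: in the paper's proof of (ii), the step from $M(x)\in M(J_{\gamma T}^Mx)$ back to $x\in(M+\gamma T)^{-1}\circ M(x)$ is only valid when $M$ is injective (without injectivity one only obtains $0\in T(J_{\gamma T}^Mx)$, not $0\in T(x)$), and $\operatorname{ran}(J_{\gamma T}^M)=D(T)$ can fail for genuinely degenerate $M$, since one cannot solve $Mx\in Mp+\gamma T(p)$ for $x$ unless the right-hand side meets $\operatorname{ran}(M)$. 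The paper sidesteps both issues by writing $M^{-1}$ throughout, i.e.\ by implicitly working with invertible $M$, which is exactly the positive-definite reading you adopt.
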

\begin{proof}
	\begin{itemize}
		\item[(i)] For $x\in \mathcal{H}$, we have
		\begin{align*}
			&J_{\gamma T}^M(x)= (M+\gamma T)^{-1}\circ M(x)
			\Leftrightarrow M(x)\in (M+\gamma T)\circ J_{\gamma T}^M(x)\\[2pt]
			\Leftrightarrow& \frac{1}{\gamma}\left( M-M\circ J_{\gamma T}^M\right)(x)\in T\left( J_{\gamma T}^M(x)\right)
			\Leftrightarrow T_{\gamma}^M(x)\in T\left( J_{\gamma T}^M(x)\right).
		\end{align*}
		\item[(ii)] For $x\in \mathcal{H}$,
		\begin{align*}
			&0\in T(x)\Leftrightarrow 0\in \gamma T(x)
			\Leftrightarrow M(x)\in (M+\gamma T)(x)\\
			\Leftrightarrow& x\in (M+\gamma T)^{-1}\circ M(x)
			\Leftrightarrow M(x)\in M(J_{\gamma T}^M(x))\\
			\Leftrightarrow& 0\in \left( M-M\circ J_{\gamma T}^M\right)(x)
			\Leftrightarrow 0\in \gamma T_{\gamma}^Mx\Leftrightarrow 0\in T_{\gamma}^Mx.
		\end{align*}
       \item[(iii)] Let $x,y\in \mathcal{H}$. Indeed,
		\begin{align}
			&x\in T_{\gamma}^M y \Leftrightarrow x\in \frac{1}{\gamma}\left( M-M\circ J_{\gamma T}^M\right)(y)\nonumber\\
			\Leftrightarrow &\gamma x\in \left( M-M\circ J_{\gamma T}^M\right)(y)\nonumber
			\Leftrightarrow M(y)-\gamma x\in (M\circ J_{\gamma T}^M)(y)\nonumber\\
			\Leftrightarrow & M^{-1}(M(y)-\gamma x)\in J_{\gamma T}^M(y)\nonumber
			\Leftrightarrow y-\gamma M^{-1}(x) \in (M+\gamma T)^{-1}\circ M(y)\nonumber\\
			\Leftrightarrow& (M+\gamma T)(y-\gamma M^{-1}(x))\in M(y)\nonumber
			\Leftrightarrow M(y)+ \gamma T(y)-\gamma x-\gamma^2 T\circ M^{-1}(x)\in M(y)\nonumber\\
			\Leftrightarrow& x\in T(y-\gamma M^{-1}(x))\label{5.01}
			\Leftrightarrow x\in (\gamma M^{-1}+T^{-1})^{-1}(y).
		\end{align}
  	   \item[(iv)] Let $x,y\in \mathcal{H}$. From part (i),
		\begin{align*}
			&x\in T_{\gamma}^My \Leftrightarrow  x\in (\gamma M^{-1}+T^{-1})^{-1}(y)\\
			\Leftrightarrow & x\in ( M^{-1}+\gamma^{-1}T^{-1})^{-1}\circ M^{-1}(\gamma^{-1}M(y))\\
			\Leftrightarrow & x\in J_{\gamma^{-1}T^{-1}}^{M^{-1}}\circ \gamma^{-1} M(y).
		\end{align*}
		\item[(v)] From part(i), for $x,y\in \mathcal{H}$,
		\begin{align*}
			x\in T_{(\gamma+\lambda)}^M(y)\Leftrightarrow & x\in ((\gamma+\lambda) M^{-1}+T^{-1})^{-1}(y)\\
			\Leftrightarrow& x\in T(y-(\gamma+\lambda) M^{-1}(x))\\
			\Leftrightarrow& x\in T(y-\gamma M^{-1}y-\lambda M^{-1}y)\\
			\Leftrightarrow& x\in T_{\gamma}^M(y-\lambda M^{-1}(y))\\
			\Leftrightarrow& x\in \left( T_{\gamma}^M \right)_{\lambda}^M y.
		\end{align*}
		\item [(vi)] For $x,y\in \mathcal{H}$ and from (\ref{5.01}), we have
		\begin{align}\label{e1.5}
			y\in T_{\gamma}^M x \Leftrightarrow y\in T(x-\gamma M^{-1}y)\Leftrightarrow ( x-\gamma M^{-1}y,y)\in \mathcal{G}(T).
		\end{align}
	\end{itemize}
\end{proof}

\begin{proposition}\label{pro2.3}
	Let $T:\mathcal{H}\to 2^{\mathcal{H}}$ be a maximal monotone operator and $M:\mathcal{H}\to \mathcal{H}$ be an admissible preconditioner and $\gamma>0$. Then, we have the following:
	\begin{itemize}
        \item [(i)] $T_\gamma^M$ is $\gamma$-cocoercive with respect to $M$,
        i.e.,
        \begin{align}
            \gamma\|T_{\gamma}^Mx_1-T_\gamma^Mx_2\|^2_{M^{-1}}\le \langle x_1-x_2, T_\gamma^Mx_1-T_\gamma^Mx_2\rangle.
        \end{align}and hence maximal monotone and $L$-Lipschitz continuous, for some $L>0$. 
        \item [(ii)] $J_{\gamma T}^M$ is $M$-nonexpansive.
        \item [(iii)] Let $x_\gamma=J_{\gamma T}^Mx$. Then $x_{\gamma}\to P^M_{\overline{dom(T)}}x$ as $\lambda \downarrow 0$, where $P^M_{\overline{dom(T)}}x= \arg\min_{y\in \overline{dom(T)}}\|x-y\|_M$.
		\item [(iv)] For $x,y,~and~p\in \mathcal{H}$
		\begin{eqnarray}{
				(y,p)= \left( J_{\gamma T}^Mx, T_{\gamma}^Mx\right) \Leftrightarrow\left\{
				\begin{array}{lc@{}c@{}r}
					(y, p)\in \mathcal{G}(T),\nonumber\\[2pt]
					x= y+\gamma M^{-1}p.
				\end{array}\right.
		}\end{eqnarray} 
	\end{itemize}
	\end{proposition}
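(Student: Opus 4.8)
The plan is to derive everything from the characterization in part (iv), which merely unpacks the two defining equations of the warped resolvent and the warped Yosida regularization. Setting $y=J_{\gamma T}^M x$ turns the definition $y=(M+\gamma T)^{-1}Mx$ into $Mx-My\in\gamma Ty$, while $p=T_\gamma^M x=\gamma^{-1}(Mx-My)$ simultaneously records that $p\in Ty$, i.e. $(y,p)\in\mathcal{G}(T)$, and that $\gamma M^{-1}p=x-y$, i.e. $x=y+\gamma M^{-1}p$. Reading the same two equations backwards gives the converse: from $(y,p)\in\mathcal{G}(T)$ and $x=y+\gamma M^{-1}p$ one gets $Mx=My+\gamma p\in(M+\gamma T)y$, hence $y=J_{\gamma T}^M x$ and then $p=\gamma^{-1}(Mx-My)=T_\gamma^M x$. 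This is just Proposition~\ref{prop1.1}(i),(vi) rephrased, so it is mechanical and I would dispatch it first.

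For part (i), I would fix $x_1,x_2$, write $y_i=J_{\gamma T}^M x_i$ and $p_i=T_\gamma^M x_i$, and substitute $y_i=x_i-\gamma M^{-1}p_i$ from (iv) into the monotonicity inequality $\langle y_1-y_2,p_1-p_2\rangle\ge 0$. Using $\langle M^{-1}(p_1-p_2),p_1-p_2\rangle=\|p_1-p_2\|_{M^{-1}}^2$, this collapses exactly to $\gamma\|p_1-p_2\|_{M^{-1}}^2\le\langle x_1-x_2,p_1-p_2\rangle$, the asserted $M$-cocoercivity. Monotonicity of $T_\gamma^M$ is then immediate, and Lipschitz continuity follows by bounding $\|p_1-p_2\|_{M^{-1}}^2\ge\|M\|^{-1}\|p_1-p_2\|^2$ and applying Cauchy--Schwarz on the right, which yields $\|T_\gamma^M x_1-T_\gamma^M x_2\|\le(\|M\|/\gamma)\|x_1-x_2\|$; since $T_\gamma^M$ is single-valued, everywhere defined (Proposition~\ref{prop1.1}(vii) with admissibility give $D(T_\gamma^M)=\mathcal{H}$) and continuous, it is maximal monotone. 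Part (ii) is then a one-liner: $T$ monotone is equivalent to $M^{-1}T$ being $M$-monotone, so Proposition~\ref{pro2.1} gives $M$-firm nonexpansiveness of $J_{\gamma T}^M$, and discarding the nonnegative second summand in that inequality leaves $M$-nonexpansiveness.

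The substantial part is (iii). Writing $y_\gamma=J_{\gamma T}^M x$ and $p_\gamma=T_\gamma^M x=\gamma^{-1}M(x-y_\gamma)\in Ty_\gamma$, I would test monotonicity against a fixed $(z,w)\in\mathcal{G}(T)$, substitute $p_\gamma$, and rearrange in the $M$-inner product to obtain
\[
\|x-y_\gamma\|_M^2\le\langle x-z,\,x-y_\gamma\rangle_M-\gamma\langle y_\gamma-z,\,w\rangle .
\]
Cauchy--Schwarz in the $M$-norm together with the equivalence of $\|\cdot\|_M$ and $\|\cdot\|$ first yields boundedness of $\{y_\gamma\}$ as $\gamma\downarrow 0$; feeding this back and letting $\gamma\downarrow 0$ annihilates the last term and gives $\limsup_{\gamma\downarrow 0}\|x-y_\gamma\|_M\le\|x-z\|_M$ for every $z\in D(T)$, hence $\limsup\|x-y_\gamma\|_M\le\|x-q\|_M$ with $q=P^M_{\overline{D(T)}}x$, which is well defined because $\overline{D(T)}$ is closed and convex by Proposition~\ref{prop1.1}(vii). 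Since each $y_\gamma\in\operatorname{ran}J_{\gamma T}^M=D(T)$ and $\overline{D(T)}$ is weakly closed, any weak cluster point $\bar y$ lies in $\overline{D(T)}$; weak lower semicontinuity of $\|x-\cdot\|_M$ gives $\|x-\bar y\|_M\le\|x-q\|_M$, and minimality of $q$ forces $\bar y=q$. Thus $y_\gamma\rightharpoonup q$, and combining this with $\|x-y_\gamma\|_M\to\|x-q\|_M$ upgrades weak to strong convergence via the Radon--Riesz property of $(\mathcal{H},\langle\cdot,\cdot\rangle_M)$.

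The main obstacle is exactly this last item: the two-stage limit (boundedness, then the $\limsup$ estimate), the identification of the weak limit with the $M$-projection, and the final weak-to-strong upgrade all tacitly require $M$ to be boundedly invertible, so that $\|\cdot\|_M$ is equivalent to $\|\cdot\|$ and $(\mathcal{H},\langle\cdot,\cdot\rangle_M)$ is a genuine Hilbert space. Reconciling these steps with the nominally positive-\emph{semi}definite (degenerate) preconditioner is the delicate point I would want to pin down, since for a genuinely degenerate $M$ both the symbol $M^{-1}$ and the $M^{-1}$-norm appearing in (i) need a separate interpretation.
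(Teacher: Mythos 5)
Your proposal is correct, and in several places it is more complete than the paper's own proof, though it follows a partly different route. For (iv) you and the paper do exactly the same thing: unpack the two defining identities of $J_{\gamma T}^M$ and $T_\gamma^M$ into $(y,p)\in\mathcal{G}(T)$ and $x=y+\gamma M^{-1}p$. For (i), however, the paper does not argue from monotonicity of $T$ directly as you do; it computes $\|T_\gamma^M x_1-T_\gamma^M x_2\|_{M^{-1}}^2=\gamma^{-2}\|(I-J_{\gamma T}^M)x_1-(I-J_{\gamma T}^M)x_2\|_M^2$ and invokes $M$-firm nonexpansiveness of $I-J_{\gamma T}^M$ (i.e.\ Proposition \ref{pro2.1}), whereas your substitution of $y_i=x_i-\gamma M^{-1}p_i$ into $\langle y_1-y_2,p_1-p_2\rangle\ge 0$ is more elementary, and it additionally delivers the explicit Lipschitz constant $\|M\|/\gamma$ and the maximality argument (single-valued, full domain, continuous, monotone), which the paper simply asserts with ``and hence maximal monotone and $L$-Lipschitz continuous.'' For (ii) the paper gives no argument at all, citing \cite{dixit2021accelerated}, while you derive it from the paper's own Proposition \ref{pro2.1} by discarding the second summand --- a cleaner, self-contained route. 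For (iii) the paper proves only boundedness of $(x_\gamma)_{\gamma\in(0,1)}$, by the same monotonicity test against a fixed $(y,v)\in\mathcal{G}(T)$ that you use, and then defers everything else to the proof of Theorem 23.48 in \cite{bauschke2017correction}; your $\limsup$ estimate, identification of weak cluster points with $P^M_{\overline{D(T)}}x$, and Radon--Riesz upgrade to strong convergence are precisely the content of that cited proof transplanted to the $M$-inner product, so you have filled in what the paper leaves to a reference. Finally, your closing caveat is well taken and is not a defect of your argument but of the statement itself: both the assertion (through $M^{-1}$ and $\|\cdot\|_{M^{-1}}$) and both proofs tacitly require $M$ to be positive definite with bounded inverse, while the standing hypothesis ``admissible preconditioner'' only demands positive semidefiniteness; the paper nowhere resolves this degenerate case, so flagging it is a legitimate criticism that applies equally to the published proof.
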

\begin{proof}
	\begin{itemize}
       \item [(i)] 
       Let $x_1,x_2\in \mathcal{H}$.
       Then 
       \begin{align*}
           \|T_{\gamma}^Mx_1-T_\gamma^Mx_2\|^2_{M^{-1}}&= \langle M^{-1}(T_{\gamma}^Mx_1-T_\gamma^Mx_2), T_{\gamma}^Mx_1-T_\gamma^Mx_2\rangle\\
           &=\frac{1}{\gamma^2} \langle (I-J_{\gamma T}^M)x_1-(I-J_{\gamma T}^M)x_2, (I-J_{\gamma T}^M)x_1-(I-J_{\gamma T}^M)x_2\rangle_M\\
           &= \frac{1}{\gamma^2} \|(I-J_{\gamma T}^M)x_1-(I-J_{\gamma T}^M)x_2\|^2_M.
       \end{align*}
       Since $I-J_{\gamma T}^M$ is $M$-firmly nonexpansive, hence
       \begin{align*}
           \|T_{\gamma}^Mx_1-T_\gamma^Mx_2\|^2_{M{-1}}&\le \frac{1}{\gamma^2} \langle x_1-x_2, (I-J_{\gamma T}^M)x_1- (I-J_{\gamma T}^M)x_2\rangle_M\\
           &= \frac{1}{\gamma^2}\langle x_1-x_2, M(I-J_{\gamma T}^M)x_1- M(I-J_{\gamma T}^M)x_2\rangle\\
           &= \frac{1}{\gamma}\langle x_1-x_2, T_\gamma^M x_1-T_\gamma^Mx_2\rangle.
       \end{align*}
       In the similar manner, we can also show that 
       \begin{align}\label{e1.7}
           \langle x_1-x_2, T_\gamma^M x_1-  T_\gamma^M x_2\rangle_M\ge \gamma\|T_{\gamma}^M x_1- T_\gamma^M x_2\|^2.  
       \end{align}
       \item [(ii)] It follows from \cite[Proposition 3.1]{dixit2021accelerated}
       \item [(iii)] Let $\gamma\in (0,1)$ and $(y,v)\in \mathcal{G}(T)$. Then by the monotonicity of $T$, we have 
       \begin{align*}
          &\langle x_{\gamma}-y, Mx-Mx_{\gamma}-\gamma v\rangle \ge 0 \\
          \Rightarrow & \|x_\gamma-y\|^2_M\le \langle x_{\gamma}-y, x-y\rangle_{M}+\|x_{\gamma}-y\|_{M}\|M^{-1}v\|_{M}\\
          \Rightarrow & \|x_{\gamma}-y\|_M\le \|x-y\|_{M}+\gamma \|M^{-1}v\|_M,
       \end{align*}
which yields $(x_{\gamma})_{\gamma\in (0,1)}$ is bounded. Rest part of the proof follows from the proof of \cite[Theorem 23.48]{bauschke2017correction}.

		\item [(iv)] From the definitions of warped resolvent and warped Yosida regularization, we have
		\begin{eqnarray}{
				\left\{
				\begin{array}{lc@{}c@{}r}
					y= J_{\gamma T}^Mx,\nonumber\\[2pt]
					p= T_{\gamma}^Mx,
				\end{array}\right.\Leftrightarrow
				\left\{
				\begin{array}{lc@{}c@{}r}
					(y, \gamma^{-1}(Mx-My)\in \mathcal{G}(T),\nonumber\\[2pt]
					p= \frac{1}{\gamma}(Mx-My),
				\end{array}\right.\Leftrightarrow
				\left\{
				\begin{array}{lc@{}c@{}r}
					(y, p)\in \mathcal{G}(T),\nonumber\\[2pt]
					x= y+\gamma M^{-1}p.
				\end{array}\right.
		}\end{eqnarray}
   
		\end{itemize}
\end{proof}
\begin{proposition}
	Let $T:\mathcal{H}\to 2^{\mathcal{H}}$ be a maximal monotone operator and $M:\mathcal{H}\to \mathcal{H}$ be an adimissable preconditioner. Then, for any $x\in \mathcal{H}$, we have the following:
	\begin{itemize}
		\item [(i)] $J_{\mu T}^Mx\left( \frac{\mu}{\gamma}x+(1-\frac{\mu}{\gamma})J_{\gamma T}^Mx\right) = J_{\gamma T}^Mx$.
		\item [(ii)]  $\|J_{\gamma T}^Mx-J_{\mu T}^Mx\|\le \frac{\beta}{\alpha}\left( 1-\frac{\mu}{\gamma}\right)\|M^{-1}T_{\gamma}^Mx\|.$
	\end{itemize}
\end{proposition}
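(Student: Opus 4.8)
The plan is to read both identities as warped versions of the classical resolvent identity, with the linearity of $M$ supplying the only extra ingredient. For part (i) I would set $y=J_{\gamma T}^Mx$ and $b=\frac{\mu}{\gamma}x+\left(1-\frac{\mu}{\gamma}\right)J_{\gamma T}^Mx$, and aim to show $J_{\mu T}^Mb=y$. Admissibility of $M$ makes the warped resolvent single-valued with full domain, so $y=(M+\gamma T)^{-1}Mx$ is equivalent to $\frac{1}{\gamma}(Mx-My)\in Ty$, and by Proposition \ref{prop1.1}(i) this element is exactly $T_\gamma^Mx$. Because $M$ is linear, $Mb-My=\frac{\mu}{\gamma}(Mx-My)$, hence $\frac{1}{\mu}(Mb-My)=\frac{1}{\gamma}(Mx-My)=T_\gamma^Mx\in Ty$; this is precisely the inclusion $Mb\in(M+\mu T)y$ characterizing $J_{\mu T}^Mb=y$, which proves (i).

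For part (ii) I would combine (i) with the $M$-nonexpansiveness of the warped resolvent recorded in Proposition \ref{pro2.3}(ii). Applying nonexpansiveness to the pair $x$ and $b$ and using $J_{\mu T}^Mb=J_{\gamma T}^Mx$ from (i) gives $\|J_{\gamma T}^Mx-J_{\mu T}^Mx\|_M\le\|x-b\|_M$. A one-line computation yields $x-b=\left(1-\frac{\mu}{\gamma}\right)(I-J_{\gamma T}^M)x$, and the definition of the warped Yosida regularization rewrites $(I-J_{\gamma T}^M)x=\gamma M^{-1}T_\gamma^Mx$, so the right-hand side becomes a scalar multiple of $\|M^{-1}T_\gamma^Mx\|_M$. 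Finally I would pass from the $M$-norm back to the ambient norm using the bounds $\alpha,\beta$ of $M$, which converts the $M$-norm estimate into the stated bound carrying the factor $\beta/\alpha$.

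The computations are all short, so the only genuine work is bookkeeping, and the single structural point worth emphasizing is that linearity of $M$ is exactly what lets the scalar coefficients factor through $M$ in part (i); for a general nonlinear kernel the identity would break. The part most likely to need care is the constant in (ii): one must track the factor $\gamma$ coming from $(I-J_{\gamma T}^M)x=\gamma M^{-1}T_\gamma^Mx$ together with the norm-equivalence constants, and fix the convention $0<\mu\le\gamma$ so that $1-\frac{\mu}{\gamma}\ge 0$ and the coefficient may be written without an absolute value. Depending on whether $\alpha,\beta$ are taken to bound $M$ itself or the induced $M$-norm, the conversion produces either $\beta/\alpha$ or $\sqrt{\beta/\alpha}$, and this is the step where I would reconcile the estimate with the precise form stated.
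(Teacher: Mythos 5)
Your proposal is correct and follows essentially the same route as the paper: part (i) is the same rescaling of the inclusion $Mx - M J_{\gamma T}^M x \in \gamma T\left(J_{\gamma T}^M x\right)$ through the linearity of $M$, and part (ii) combines (i) with nonexpansiveness of $J_{\mu T}^M$ and the norm equivalence induced by $M$, exactly as the paper does. Your closing caution about constants is warranted and in fact pinpoints a slip in the paper's own last line: since $(I - J_{\gamma T}^M)x = \gamma M^{-1} T_{\gamma}^M x$, the final bound should carry the factor $\gamma\left(1 - \frac{\mu}{\gamma}\right) = \gamma - \mu$ rather than $1 - \frac{\mu}{\gamma}$, while the norm conversion honestly yields the Lipschitz constant $\sqrt{\beta/\alpha} \le \beta/\alpha$, so the stated $\beta/\alpha$ is safe but the missing $\gamma$ is a genuine (if minor) discrepancy in both the statement and the paper's proof.
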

\begin{proof}
	\begin{itemize}
		\item[(i)] 
		For $x\in \mathcal{H}$ and $\mu=\lambda \gamma$, we have
		\begin{align*}
			&x\in ((M+\gamma T)^{-1}\circ M)^{-1}\circ (M+\gamma T)^{-1}\circ Mx\\
			&\Leftrightarrow x\in M^{-1}\circ (M+\gamma T)\circ J_{\gamma T}^Mx\\
			& \Leftrightarrow (Mx- M\circ J_{\gamma T}^Mx)\in \gamma T(J_{\gamma T}^Mx)\\
			& \Leftrightarrow \lambda Mx- (1-\lambda)M\circ J_{\gamma T}^Mx\in (M+\mu T)J_{\gamma T}^Mx\\
			&\Leftrightarrow (M+\mu T)^{-1}\circ M(\lambda x+ (1-\lambda)J_{\gamma T}^Mx )=J_{\gamma T}^Mx\\
			& \Leftrightarrow J_{\mu T}^M (\lambda x+ (1-\lambda)J_{\gamma T}^M x)=  J_{\gamma T}^Mx.
		\end{align*}
		Putting $\lambda=\frac{\mu}{\gamma}$, we have
		\begin{align*}
			J_{\mu T}^Mx\left( \frac{\mu}{\gamma}x+(1-\frac{\mu}{\gamma})J_{\gamma T}^Mx\right) = J_{\gamma T}^Mx.
		\end{align*}
		\item [(ii)] From (i), we get
			\begin{align*}
			\|J_{\gamma T}^Mx-J_{\mu T}^Mx\|&= \|J_{\mu T}^Mx\left( \frac{\mu}{\gamma}x+(1-\frac{\mu}{\gamma})J_{\gamma T}^Mx\right) - J_{\mu T}^Mx \|\\
			&\le \frac{\beta}{\alpha}\left\|\left( 1-\frac{\mu}{\gamma}\right)(x-J_{\gamma T}^Mx)  \right\| \\
			&= \frac{\beta}{\alpha} \left( 1-\frac{\mu}{\gamma}\right)\|M^{-1}T_{\gamma}^Mx\|. 
		\end{align*}
	\end{itemize}
\end{proof}

\section{Motivation to define warped Yosida regularization}\label{sc3}
Consider the monotone inclusion problem$\colon$
\begin{align}\label{1.5}
    \text{find}~u\in \mathcal{H}~\text{such that}~0\in Tu,
\end{align}
where $T:\mathcal{H}\to 2^{\mathcal{H}}$ is a maximal monotone operator defined on Hilbert space $\mathcal{H}$. The differential inclusion to solve problem (\ref{1.5}) is
\begin{align}\label{1.6}
    \begin{cases}
    \dot{u}(t)\in -Tu(t)\\
    u(t_0)=u_0\in \mathcal{H}.
    \end{cases}
    \end{align}
    In general, the differential inclusion \eqref{1.6} is not well-posed. For example, if we consider an operator $T:\mathbb{R}^2\to \mathbb{R}^2$ defined by
    \begin{align*}
        T(x,y)=(-y, x), 
    \end{align*}then the orbit of (\ref{1.6}) does not converge to zero of $T$. But we have $Z(T)= Z(T_{\gamma}^M)$. In order to overcome this difficulty, we shall consider the following dynamical system 
  \begin{align}\label{1.7}
    \begin{cases}
        \dot{u}(t) +T_{\gamma}^Mu(t)=0\\
        u(t_0)=u_0\in \mathcal{H}.
        \end{cases}
    \end{align}
    In the next result, we study the properties of the orbit $u(t)$ generated by the dynamical system (\ref{1.7}). 
    \begin{proposition}
    Let $T:\mathcal{H}\to 2^{\mathcal{H}}$ be a maximal monotone operator with $T^{-1}(0)\neq \emptyset$ and $M$ be an admissible preconditioner. Let $\gamma>0$ and $u_0\in D(T)$. Then we have the following identities for the unique solution $u(t)$ of dynamical systems (\ref{1.7})$\colon$
    \begin{itemize}
        \item [(i)] $u\in C^{1}(\mathbb{R}, \mathcal{H})$ and $u(t)\in D(T)$, $\forall t>0$.
       \item [(ii)] The orbit $u$ is bounded and $\dot{u}\in L^{2}([t_0,\infty);\mathcal{H})$.
         \item [(iii)] $u(t)$ converges weakly to $u^*$, for some $u^*\in T^{-1}(0)$.
    \end{itemize}
    \end{proposition}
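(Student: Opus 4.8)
The plan is to read \eqref{1.7} as an autonomous ordinary differential equation in $\mathcal{H}$ whose vector field $-T_\gamma^M$ is, by Proposition~\ref{pro2.3}(i), single-valued, everywhere defined and globally $L$-Lipschitz. For part (i) I would invoke the Cauchy--Lipschitz (Picard--Lindel\"of) theorem to get a unique local solution; fixing $u^*\in Z(T_\gamma^M)$, the bound $\|T_\gamma^M u\|=\|T_\gamma^M u-T_\gamma^M u^*\|\le L\|u-u^*\|$ gives at most linear growth of the field, so the maximal solution is global. Since $\dot u=-T_\gamma^M u$ is a composition of the continuous maps $T_\gamma^M$ and $u$, it is continuous, whence $u\in C^{1}(\mathbb{R},\mathcal{H})$; as $u$ is then Lipschitz, so is $\dot u$. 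For the membership $u(t)\in D(T)$ I would argue by flow-invariance: $\overline{D(T)}$ is convex by Proposition~\ref{prop1.1}(vii), and starting from $u_0\in D(T)$ the field $-T_\gamma^M u$ does not point strictly outward, so the trajectory cannot leave. This last point is the one place in (i) that needs genuine care rather than a routine citation.

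For part (ii), fix an arbitrary $u^*\in T^{-1}(0)=Z(T_\gamma^M)$, the equality of the zero sets being Proposition~\ref{prop1.1}(ii), and use the Lyapunov function $h(t)=\tfrac12\|u(t)-u^*\|_M^2$. Differentiating along the orbit and using $T_\gamma^M u^*=0$ together with the $M$-cocoercivity estimate \eqref{e1.7} of Proposition~\ref{pro2.3}(i) yields
\begin{align*}
\dot h(t)=-\langle u(t)-u^*,\,T_\gamma^M u(t)-T_\gamma^M u^*\rangle_M\le-\gamma\,\|T_\gamma^M u(t)\|^2\le 0 .
\end{align*}
Thus $h$ is nonincreasing and bounded below, so $\lim_{t\to\infty}\|u(t)-u^*\|_M$ exists and the orbit is bounded; integrating from $t_0$ to $\infty$ gives $\gamma\int_{t_0}^{\infty}\|T_\gamma^M u(t)\|^2\,dt\le h(t_0)<\infty$, i.e. $\dot u=-T_\gamma^M u\in L^{2}([t_0,\infty);\mathcal{H})$. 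Working in the $M$-norm is precisely what makes this bound come out in the ambient norm $\|\cdot\|$, avoiding any $M^{-1}$ weighting.

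For part (iii) I would first upgrade the $L^2$ information to strong decay $T_\gamma^M u(t)\to0$. Since $t\mapsto T_\gamma^M u(t)$ is Lipschitz, $\dot u$ is Lipschitz, so $g(t):=\|\dot u(t)\|^2$ is absolutely continuous with $|g'(t)|=|2\langle\ddot u,\dot u\rangle|\le 2L\|\dot u(t)\|^2\in L^1$ almost everywhere; hence $g$ has a limit at infinity, and that limit must be $0$ because $g\in L^1$. Consequently $T_\gamma^M u(t)\to0$ in norm. I would then apply Lemma~\ref{2l1} with $C=T^{-1}(0)$, viewing $\mathcal{H}$ as the Hilbert space $(\mathcal{H},\langle\cdot,\cdot\rangle_M)$, whose weak topology coincides with the original one: hypothesis (i) is the existence of $\lim_t\|u(t)-u^*\|_M$ established above, and hypothesis (ii) follows from demiclosedness of the maximal monotone operator $T_\gamma^M$ (Proposition~\ref{pro2.3}(i))---if $u(t_n)\rightharpoonup\bar u$ along some $t_n\to\infty$, then $T_\gamma^M u(t_n)\to0$ strongly forces $0=T_\gamma^M\bar u$, so $\bar u\in Z(T_\gamma^M)=T^{-1}(0)$. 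The lemma then delivers $u(t)\rightharpoonup u_\infty$ with $u_\infty\in T^{-1}(0)$.

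The main obstacle is twofold. The genuinely nontrivial analytic step is the passage from $\dot u\in L^2$ to the strong decay $T_\gamma^M u(t)\to0$: without it the demiclosedness argument identifying the weak cluster points of the \emph{full} trajectory cannot be run, and it is the Lipschitz bound on $\dot u$ (hence on $\ddot u$ a.e.) that rescues it. The second delicate point is the invariance claim $u(t)\in D(T)$ in (i): unlike the $C^1$ regularity, which is a direct consequence of global Lipschitzness, keeping the orbit inside $D(T)$ rather than merely $\overline{D(T)}$ exploits the fine behaviour of $T_\gamma^M$ near the boundary of the domain and is where I expect to spend the most effort. Throughout, one must keep $M$ positive definite with bounded inverse so that $\|\cdot\|_M$ and $\|\cdot\|$ are equivalent and induce the same weak topology.
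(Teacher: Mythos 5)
Your proposal is correct where it matters and follows the same route as the paper: part (ii) is essentially verbatim the paper's argument --- the anchor $h(t)=\tfrac12\|u(t)-u^*\|_M^2$, the identity $\dot h(t)=-\langle u(t)-u^*,\,T_\gamma^M u(t)\rangle_M$, and the cocoercivity estimate (\ref{e1.7}) giving $\dot h(t)+\gamma\|\dot u(t)\|^2\le 0$, hence boundedness and $\dot u\in L^2$ --- and part (iii) invokes Lemma \ref{2l1} together with the maximal monotonicity (demiclosedness) of $T_\gamma^M$, exactly as the paper does. The difference is one of completeness, and it favours you: the paper disposes of (iii) in a single sentence, leaving unverified the second hypothesis of Lemma \ref{2l1}, namely that \emph{every} weak sequential cluster point of the orbit lies in $T^{-1}(0)$; the $L^2$ bound alone does not give this, and your upgrade to strong decay $\dot u(t)\to 0$ --- via $g=\|\dot u\|^2$ locally absolutely continuous with $g'\in L^1$, using $\|\ddot u\|\le L\|\dot u\|$ a.e. --- is precisely the missing step that makes the demiclosedness argument run for the full trajectory. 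One caveat cuts the other way: for the claim $u(t)\in D(T)$ in (i), your Nagumo-style invariance sketch does not work as stated, since $D(T)$ need not be closed (only $\overline{D(T)}$ is convex and closed), so a tangency/invariance argument can at best confine the orbit to $\overline{D(T)}$, and no outward-pointing condition is actually verified; you rightly flag this as the delicate point, and you should know the paper is no better here --- its entire proof of (i) is the remark that $T_\gamma^M$ is Lipschitz continuous, which yields existence, uniqueness and $C^1$ regularity but says nothing about membership in $D(T)$.
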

    \begin{proof}
        \begin{itemize}
            \item [(i)] It follows from the fact that $T_{\gamma}^M$ is Lipschitz continuous. 
            \item [(ii)] Let $u^*\in T^{-1}(0)$. Define an anchor 
            \begin{align*}
                h(t)=\frac{1}{2}\|u(t)-u^*\|^2_M. 
            \end{align*}
            From (\ref{1.7}), we get 
            \begin{align*}
                \dot{h}(t)+\langle u(t)-u^*, T_\gamma^M x(t)\rangle_M=0.
            \end{align*}
            Since $u^*\in T^{-1}(0)= (T_\gamma^M)^{-1}(0)$ and by (\ref{e1.7}), we deduce
            \begin{align}
                &\dot{h}(t)+\gamma\|T_\gamma^M u(t)\|^2\le 0\nonumber\\
                \Rightarrow ~& \dot{h}(t)+\gamma\|\dot{u}(t)\|^2\le 0.\label{2.11}
            \end{align}Hence $t\mapsto h(t)$ is a monotonically decreasing function. Since $t\mapsto h(t)$ is a locally absolutely continuous function, there exists $N_1\in \mathbb{R}$ such that 
            \begin{align*}
                h(t)\le N_1 ~\text{for all}~t\in [t_0, \infty),
            \end{align*}which concludes that $h(t)$ is a bounded function and hence $u(t)$ is also bounded function.
            Integrating (\ref{2.11}), we get a real number $N_2$ such that
            \begin{align}
                h(t)+\gamma\int_{t_0}^{t}\|\dot{u}(t)\|^2\le N_2~\text{for all}~t\in [t_0, \infty).
            \end{align}Using the boundedness of $h$, we conclude that $\dot{u}\in L^2([t_0,\infty); \mathcal{H})$.
            
            \item [(iii)] By Lemma \ref{2l1}, and maximal monotonicity of $T_{\gamma}^M$, the orbit weakly converges to $u^*$ for some $u^*\in T^{-1}(0)$.
           
        \end{itemize}
    \end{proof}

    Interestingly, from $T_{\gamma}^M$ one can obtain different splitting based dynamical systems by different choices of operators $T$ and $M$, e.g., Douglas-Rachford, forward-Douglas-Rachford dynamical systems. \\
    Let $u=(x,y)\in \mathcal{H}:=H^2$, where $H$ is a real Hilbert space. Consider $T:H^2 \to 2^{H^2}$ and $M:H^2 \to H^2$ be the operators defined by \begin{align}\label{1.8}
    T=\begin{bmatrix}
    \alpha A & I\\
    -I & (\alpha B)^{-1}
    \end{bmatrix},~~~
    M=\begin{bmatrix}
    I & -I\\
    -I & I
    \end{bmatrix},
    \end{align}
    where $A,B:H\to 2^H$ are maximal monotone operators. With this choice of operator $T$, problem (\ref{1.5}) is converted into 
    \begin{align}
        \text{find}~u\in H~\text{such that}~0\in (A+B)u,
    \end{align}
    equivalently, $0\in (A+B)x$ if and only if there exists $y\in H$ such that $0\in \alpha Ax+y$ and $0\in \alpha Bx$.
    Now, for $T$ and $M$ defined by (\ref{1.8}) we have
    \begin{align*}
        &J_{T}^M\\
        &= \begin{bmatrix}
            \alpha A+I& 0\\
            -2I & (\alpha B)^{-1}+I
        \end{bmatrix}^{-1}\circ \begin{bmatrix}
            I& -I\\
            -I & I
        \end{bmatrix}\\[2pt]
        &=\begin{bmatrix}
            (\alpha A+ I)^{-1} & -(\alpha A+I)^{-1}\\
            2((\alpha B)^{-1}+I)^{-1}(\alpha A+I)^{-1}- ((\alpha B)^{-1}+I)^{-1} & -2((\alpha B)^{-1}+I)^{-1}(\alpha A+I)^{-1}+ ((\alpha B)^{-1}+I)^{-1}
        \end{bmatrix}\\[2pt]
        &= \begin{bmatrix}
            J_{\alpha A}& -J_{\alpha A}\\
            2J_{(\alpha B)^{-1}}J_{\alpha A}- J_{(\alpha B)^{-1}} & -2J_{(\alpha B)^{-1}}J_{\alpha A}+ J_{(\alpha B)^{-1}}
        \end{bmatrix}.
    \end{align*}
    So, \begin{align*}
        T^M=&\begin{bmatrix}
            I& -I\\
            -I& I
        \end{bmatrix} -\begin{bmatrix}
            J_{\alpha A}- 2J_{(\alpha B)^{-1}}J_{\alpha A}+ J_{(\alpha B)^{-1}} & - J_{\alpha A} +2J_{(\alpha B)^{-1}}J_{\alpha A}- J_{(\alpha B)^{-1}}\\
            -J_{\alpha A}+2J_{(\alpha B)^{-1}}J_{\alpha A}- J_{(\alpha B)^{-1}} & J_{\alpha A}-2J_{(\alpha B)^{-1}}J_{\alpha A}+ J_{(\alpha B)^{-1}}
        \end{bmatrix}\\[2pt]
         =& \begin{bmatrix}
            \frac{I-R_{\alpha A}R_{\alpha B}}{2} & \frac{R_{\alpha A}R_{\alpha B}-I}{2}\\
            \frac{R_{\alpha A}R_{\alpha B}-I}{2} &  \frac{I-R_{\alpha A}R_{\alpha B}}{2}
        \end{bmatrix},
    \end{align*}
    where $R_{\alpha A}$ and $R_{\alpha B}$ are reflected resolvents of the operators $A$ and $B$, respectively. 
Hence from dynamical system (\ref{1.7}), we get
\begin{align*}
\begin{cases}
        \dot{x}(t)=  -\frac{I-R_{\alpha A}R_{\alpha B}}{2} (x(t)-y(t))\\
         \dot{y}(t)= -\frac{I-R_{\alpha A}R_{\alpha B}}{2} (y(t)-x(t))\\
         x(0)=x_0, y(0)=y_0.
    \end{cases}
\end{align*}
By substituting $z(t)= x(t)-y(t)$, we obtain that  
\begin{align*}
    \begin{cases}
        \dot{z}(t)+z(t)= R_{\alpha A} R_{\alpha B}z(t)\\
        z(0)=z_0,
    \end{cases}
\end{align*}
   which is a Douglas-Rachford (without preconditioing) dynamical system investigated in \cite{gautamparameterized}. 
   
\section{Preconditioned backward-backward splitting}\label{sc4}
    This section generalizes the regularization of sum of two monotone operators \cite{mahey1993partial} using preconditioner. 
A preconditioned regularization of monotone inclusion problem 
\begin{align}\label{eq1.1}
    \text{find}~x\in \mathcal{H}~\text{such that}~ 0\in Ax+Bx 
\end{align}
is 
\begin{align}\label{eq1.2}
    \text{find}~x\in \mathcal{H}~\text{such that}~ 0\in A_\lambda^M x+Bx.
\end{align}
Note that,  
$\text{if}~ 0\in A_\lambda^M x+Bx$,
    \begin{align}
    M^{-1}\circ (M+\lambda B)x&\in J^M_{\lambda A}x\nonumber\\
    x&\in (M+\lambda B)^{-1}\circ M\circ J_{\lambda A}^Mx\nonumber\\
    x&\in J_{\lambda B}^M \circ J_{\lambda A}^M x. 
\end{align}
We study the convergence of sequence $\{x_n\}$ defined by$\colon$ $\{x_n\}\in \mathcal{H}$ and $\{x_n\}$ solves the problem (\ref{eq1.2}). Assume that problem (\ref{eq1.2}) has a solution for $\lambda>0$. Then for each $n$, $\{x_n\}$ satisfies:
\begin{align}\label{eq1.7}
    x_n= J_{\lambda B}^M \circ J_{\lambda A}^M x_n.
\end{align}
Let $y_n= A^M_\lambda x_n$. In the next result, we study the convergence analysis of sequences $\{x_n\}$ and $\{y_n\}$. 
\begin{theorem}
Let $A,B:\mathcal{H}\to 2^\mathcal{H}$ be two maximal monotone operators. Then we have the following:
\begin{itemize}
    \item [(i)]Assume that $x$ is a solution of (\ref{eq1.1}) and $y\in Ax\cap (-Bx)$. Then $\|y_n\|\le \|y\|$ for any $\lambda>0$.
    \item [(ii)] Let $x$ be a limit point of $\{x_n\}$ and the sequence $\{y_n\}$ be bounded. Then, $x$ solves \eqref{eq1.1}.
    \item [(iii)] If $\{x_n\}$ has a limit point, then problem \eqref{eq1.1} has a solution if and only if $\{y_n\}$ is bounded. 
    \item [(iv)] If $\{x_n\}$ is bounded and problem \eqref{eq1.1}  has a unique solution, then $x_n\to x^*$ and $y_n\to y^*$ which is the element of minimum norm in $Ax^*\cap (-Bx^*)$.
\end{itemize}

\end{theorem}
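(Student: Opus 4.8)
The first thing I would do is turn the fixed-point equation \eqref{eq1.7} into a pair of inclusions that isolate the single auxiliary vector $y_n$. Writing $u_n:=J_{\lambda_n A}^M x_n$ (so that $x_n=J_{\lambda_n B}^M u_n$) and invoking Proposition \ref{pro2.3}(iv), the identity $y_n=A_{\lambda_n}^M x_n$ is equivalent to $(u_n,y_n)\in\mathcal G(A)$ together with $x_n=u_n+\lambda_n M^{-1}y_n$; unwinding $x_n=J_{\lambda_n B}^M u_n$ in the same way gives $-y_n\in Bx_n$. Thus the whole theorem rests on the three relations
\begin{align*}
y_n\in A u_n,\qquad -y_n\in B x_n,\qquad u_n-x_n=-\lambda_n M^{-1}y_n,
\end{align*}
in the regime $\lambda_n\downarrow 0$ (the parameter being reduced, as in the abstract). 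I would record these once and reuse them throughout.

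For (i), let $x$ solve \eqref{eq1.1} and $y\in Ax\cap(-Bx)$. Monotonicity of $A$ at the pairs $(u_n,y_n),(x,y)$ gives $\langle u_n-x,y_n-y\rangle\ge 0$, while monotonicity of $B$ at $(x_n,-y_n),(x,-y)$ gives $\langle x_n-x,y_n-y\rangle\le 0$. Subtracting and substituting $u_n-x_n=-\lambda_n M^{-1}y_n$ yields $\langle M^{-1}y_n,y_n-y\rangle\le 0$, i.e.\ $\|y_n\|_{M^{-1}}^2\le\langle y_n,y\rangle_{M^{-1}}$; Cauchy--Schwarz in the $M^{-1}$-metric then gives $\|y_n\|_{M^{-1}}\le\|y\|_{M^{-1}}$, valid for every $n$ and hence for every choice of parameter. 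Since $y_n\in\operatorname{ran}M$, the degeneracy of $M$ causes no trouble here.

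For (ii)--(iii) I would pass to the limit through the product operator $\mathbf A:=A\times B$ on $\mathcal H^2$, which is maximal monotone. If $x$ is a weak cluster point of $\{x_n\}$ and $\{y_n\}$ is bounded, extract a subsequence with $x_{n_k}\rightharpoonup x$ and $y_{n_k}\rightharpoonup y$; because $\lambda_{n_k}\downarrow 0$ and $\{y_n\}$ is bounded, $u_{n_k}-x_{n_k}\to 0$ strongly, so $u_{n_k}\rightharpoonup x$ as well. The pairs $\big((u_{n_k},x_{n_k}),(y_{n_k},-y_{n_k})\big)$ lie in $\mathcal G(\mathbf A)$, and
\begin{align*}
\langle (u_{n_k},x_{n_k}),(y_{n_k},-y_{n_k})\rangle=\langle u_{n_k}-x_{n_k},y_{n_k}\rangle=-\lambda_{n_k}\|y_{n_k}\|_{M^{-1}}^2\le 0=\langle(x,x),(y,-y)\rangle .
\end{align*}
The $\limsup$-closedness of the graph of a maximal monotone operator then gives $(y,-y)\in\mathbf A(x,x)$, i.e.\ $y\in Ax$ and $-y\in Bx$, whence $0\in Ax+Bx$; this proves (ii). Part (iii) is immediate: if \eqref{eq1.1} has a solution then (i) bounds $\{y_n\}$, and conversely, if $\{y_n\}$ is bounded and $\{x_n\}$ has a cluster point, then (ii) produces a solution.

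Finally, for (iv), boundedness of $\{x_n\}$ (assumed) and of $\{y_n\}$ (from (i), since a solution exists) provides weak cluster points. The argument of (ii) together with uniqueness shows every weak cluster point of $\{x_n\}$ equals $x^*$, so $x_n\rightharpoonup x^*$, and it shows every weak cluster point $\bar y$ of $\{y_n\}$ lies in $Ax^*\cap(-Bx^*)$. Applying (i) with $y=y^*$ (the minimal $M^{-1}$-norm element of $Ax^*\cap(-Bx^*)$) gives $\|y_n\|_{M^{-1}}\le\|y^*\|_{M^{-1}}$, while weak lower semicontinuity gives $\|y^*\|_{M^{-1}}\le\|\bar y\|_{M^{-1}}\le\liminf\|y_n\|_{M^{-1}}$; minimality forces $\bar y=y^*$ and $\|y_n\|_{M^{-1}}\to\|y^*\|_{M^{-1}}$, so $y_n\to y^*$ strongly. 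The delicate point, and the step I expect to be the main obstacle, is upgrading $x_n\rightharpoonup x^*$ to strong convergence. The $M$-firm nonexpansiveness of $J_{\lambda_n A}^M,J_{\lambda_n B}^M$ (Proposition \ref{pro2.1}) is not by itself enough: summing the two firm-nonexpansiveness inequalities against $x^*$, the quadratic slack terms all vanish as $\lambda_n\downarrow 0$ (because $u_n-x_n\to0$ and $J_{\lambda_n A}^M x^*,J_{\lambda_n B}^M x^*\to x^*$ by Proposition \ref{pro2.3}(iii)), leaving no contraction to exploit. I would therefore expect strong convergence of $\{x_n\}$ to require either finite-dimensionality (where it follows automatically from $x_n\rightharpoonup x^*$) or an additional demiclosedness argument exploiting uniqueness beyond the Fej\'er-type estimates, and I would isolate this as the crux.
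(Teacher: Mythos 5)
Your proposal is correct, and on the central part (ii) it takes a genuinely different route from the paper. Part (i) is essentially the paper's own argument, just organized symmetrically: the paper substitutes $x_n=\lambda M^{-1}y_n+J_{\lambda A}^{M}x_n$ into the $B$-monotonicity inequality and then invokes $A$-monotonicity at the pair $(J_{\lambda A}^{M}x_n,y_n)$, which is exactly your ``subtract the two monotonicity inequalities'' step; both land on $\|y_n\|_{M^{-1}}^{2}\le\langle y_n,y\rangle_{M^{-1}}$ (the paper then states the bound in the unweighted norm, glossing over the $M^{-1}$-weighting that you correctly keep track of). For (ii), however, the paper does not use the product operator or any $\limsup$ criterion: it first localizes the cluster point, showing $x\in\overline{D(A)}\cap\overline{D(B)}$ via Proposition \ref{prop1.1}(vii), then sets $z_{n_k}=J_{\lambda A}^{M}x_{n_k}$ and combines $M$-nonexpansiveness with Proposition \ref{pro2.3}(iii) (so that $J_{\lambda A}^{M}x\to P^{M}_{\overline{D(A)}}x=x$ as $\lambda\downarrow 0$) to get $z_{n_k}\to x$ strongly, and finally closes the graphs of $A$ and $B$ along $(z_{n_k},y_{n_k})$ and $(x_{n_k},-y_{n_k})$. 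That argument requires the cluster point of $\{x_n\}$ to be a \emph{strong} one. Your product-space argument with the $\limsup$-closedness of the graph of the maximal monotone operator $A\times B$ (valid since $\langle u_{n_k}-x_{n_k},y_{n_k}\rangle=-\lambda_{n_k}\|y_{n_k}\|_{M^{-1}}^{2}\le 0$) works under mere weak clustering and bypasses the domain and resolvent-continuity machinery entirely; this is a real strengthening, and it is precisely what is needed to run (iv) from boundedness alone, since in infinite dimensions boundedness only supplies weak cluster points.

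Concerning the ``crux'' you flag in (iv): you are right that the available estimates only yield $x_n\rightharpoonup x^{*}$, and you should know that the paper does no better. Its proof of (iv) asserts that $x^{*}$ is the unique limit point of the bounded sequence $\{x_n\}$ ``and hence $x_n\to x^{*}$,'' and then writes ``let $y_n\to y$,'' silently assuming limit points and convergence in whatever topology is required; moreover its part (ii), as proved, applies only to strong cluster points, which boundedness does not provide. Your treatment of $\{y_n\}$ (every weak cluster point $\bar y$ lies in $Ax^{*}\cap(-Bx^{*})$, the bound from (i) against the minimal-norm element $y^{*}$ plus weak lower semicontinuity forces $\bar y=y^{*}$ together with norm convergence, hence strong convergence) is in fact more complete than the paper's. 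So the one step you could not close --- strong convergence of $\{x_n\}$ --- is a gap in the printed proof itself, not a defect of your approach relative to it; your suggestion that it needs finite dimensionality or an additional argument is a fair assessment of the theorem as stated.
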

\begin{proof}
\begin{itemize}
    \item [(i) ] As $y\in -Bx$ and $y_n\in -B x_n$, so using the monotonicity of $B$, we have
\begin{align}\label{eq1.8}
    \langle y_n-y, x_n-x\rangle \le 0.
\end{align}
Also 
\begin{align*}
    y_n= A_\lambda^M x_n= \frac{1}{\lambda} \left( M x_n- M\circ J_{\lambda A}^M x_n\right)
\end{align*}
which implies that 
\begin{align}\label{1.15}
    M x_n= \lambda y_n + M\circ J_{\lambda A}^M x_n
\end{align}
From \eqref{eq1.8} and \eqref{eq1.9}, we have
\begin{align*}
    \langle y_n-y, x_n-x\rangle &= \langle y_n-y, \lambda M^{-1}y_n+ J_{\gamma A}^M x_n-x \rangle \le 0,
\end{align*}
i.e.,
\begin{align*}
    \langle y_n-y, \lambda M^{-1} y_n\rangle &\le -\langle y_n-y, J_{\lambda A}^M x_n- x\rangle.
\end{align*}
Since $y\in Ax$, $y_n= A_\lambda^M x_n\in A(J_{\lambda A}^M)$ and $A$ is monotone, we get 
\begin{align*}
    \|y_n\|^2\le \|y\|\|y_n\|. 
\end{align*}
\item [(ii)] By Proposition \ref{prop1.1}(vii) and (\ref{eq1.7}), we have $x_n\in D(B)$. Again, by Proposition \ref{prop1.1}(vii) and (\ref{1.15}), we obtain $x_n \in \lambda M^{-1} y_n+ D(A)$. As operator $M$ and sequence $\{y_n\}$ are bounded, the first term of the sequence $\{x_n\}$ tends to $0$ as $\lambda \downarrow 0$ and limit point $x$ must belong to $\overline{D(A)}$. Hence $x\in \overline{D(A)}\cap \overline{D(B)}$. 

Since $\{y_n\}$ is bounded, we can consider a convergent subsequence $\{y_{n_k}\}$ of $\{y_n\}$. Let $y$ be the limit of $\{y_{n_k}\}$. 
Consider $z_{n_k}= J_{\lambda A}^Mx_{n_k}$. Then $y_{n_k}\in A z_{n_k}$ for any $k$. Now,
\begin{align*}
    \|z_{n_k}-x\|_M&=\|z_{n_k}- J_{\lambda A}^M x+ J_{\lambda A}^M x-x\|_M\\
    &\le \|J_{\lambda A}^M x_{n_k}- J_{\lambda A}^M x\|_M+\|J_{\lambda A}^M x-x\|_M\\
    & \le \|x_{n_k}-x\|_M+\|J_{\lambda A}^M x-x\|_M.
\end{align*}
Since $\|\cdot\|$ and $\|\cdot\|_M$ are equivalent norms, by Proposition \ref{pro2.3}(iii), the second term of the above inequality tends to $0$ as $\lambda\downarrow 0$ (as $x\in \overline{D(A)}$), and hence $z_{n_k}\to x$. 

As $A$ is a closed map, $y_{n_k}\to y$ and $z_{n_k}\to x$, hence $y\in Ax$. Also $-y_{n_k}\in Bx_{n_k}$ and $B$ is a closed map, $-y\in Bx$. Thus $x$ solves (\ref{eq1.1}). 
\item [(iii)] It follows from 
(i) and (ii).
\item [(iv)] Let $x^*$ is an unique solution of the problem (\ref{eq1.1}). Since $\{x_n\}$ is bounded and from part (ii), we conclude that its limit point is a solution of (\ref{eq1.1}). Hence $x^*$ is the unique limit point of the sequence $\{x_n\}$ and $x_n\to x^*$. Suppose that $y^*$ is an element of minimum norm in $Ax^*\cap (-Bx^*)$. Then from part (i), we get
\begin{align*}
    \|y_n\|\le \|y^*\|.
\end{align*}
Let $y_n\to y$. Then $\|y\|\le \|y^*\|$. Also from part (2), we have $y\in Ax^*\cap(-Bx^*)$, and hence $y=y^*$. hence $y_n\to y^*$.
\end{itemize}
\end{proof}
In the similar manner, we can show that the problem:
\begin{align}\label{1.13}
    \text{find}~y\in \mathcal{H}~\text{such that}~0\in A y+ B_{\lambda}^M y
\end{align}
is also a preconditioned regularization of problem (\ref{eq1.1}). The dual of the problems (\ref{eq1.2}) and (\ref{1.13}) are 
\begin{align}
    \text{find} ~x^*\in H~\text{such that}~ 0\in \widetilde{A_\lambda^M}x^*+ B^{-1}x^*,
\end{align}
and 
\begin{align}
    \text{find}~y^*\in H ~\text{such that}~ 0\in A^{-1}y^*+ \widetilde{B_\lambda^M}y^*,
\end{align}respectively, where $\widetilde{A}= (-I)\circ A^{-1}\circ(-I)$.  

In the next result, we show the relation between the solution set of the problems (\ref{1.1}), (\ref{e1.2}), (\ref{eq1.2}) and (\ref{1.13}), which are denoted by $S$, $S^*$, $E$ and $F$, respectively. 
\begin{proposition}\label{pro4.1}
    The following identities hold:
    \begin{itemize}
    \item [(i)]$E=\Fix (J_{\lambda B}^M J_{\gamma A}^M)=J_{\lambda B}^M (F)$, and $F=\Fix (J_{\lambda A}^M J_{\lambda B}^M)=J_{\lambda A}^M (E)$.
    \item [(ii)]$S= (F\times E) \cap \mathcal{G}(J_{\lambda B}^M)$
    \item [(iii)] $S^*= \{(\lambda u^*, \lambda v^*)\}$ such that  $v^*=-u^*$, where $u^*= J^{M^{-1}}_{\frac{A^{-1}+\widetilde{B}}{\lambda}}(0)$ and $v^*=J^{M^{-1}}_{\frac{\widetilde{A}+B^{-1}}{\lambda}}(0)$. 
    \item [(iv)] $S^*= -R\circ S$.
    \item [(v)] $J_{\lambda B}^M|_F :F\to E: x\mapsto x+\lambda M^{-1}u^*$ is a bijective map with inverse $J_{\lambda A}^M|_E :E\to F: y\mapsto y+\lambda M^{-1}v^*$.
    \end{itemize}
\end{proposition}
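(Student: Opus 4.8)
The plan is to convert each problem to its resolvent fixed-point form and then isolate a single cocoercivity estimate that forces the dual solution to be unique. I would begin with (ii), which is the structural backbone. Splitting the inclusion in (\ref{1.1}) into coordinates gives $0\in x-y+\lambda M^{-1}Ax$ and $0\in y-x+\lambda M^{-1}By$; applying $M$ and rearranging these read $My\in(M+\lambda A)x$ and $Mx\in(M+\lambda B)y$, i.e. $x=J_{\lambda A}^My$ and $y=J_{\lambda B}^Mx$. The second equality is exactly $(x,y)\in\mathcal{G}(J_{\lambda B}^M)$, and substituting one equality into the other gives $x\in\Fix(J_{\lambda A}^MJ_{\lambda B}^M)=F$ and $y\in\Fix(J_{\lambda B}^MJ_{\lambda A}^M)=E$; conversely, $x\in F$ together with $y=J_{\lambda B}^Mx$ forces $x=J_{\lambda A}^MJ_{\lambda B}^Mx=J_{\lambda A}^My$, closing (ii). For (i) I would reverse the chain of equivalences already displayed before (\ref{eq1.7}) to obtain $E=\Fix(J_{\lambda B}^MJ_{\lambda A}^M)$ (the $J_{\gamma A}^M$ printed in the statement should read $J_{\lambda A}^M$), and then read the two-step map off the fixed-point equations: if $y\in F$ then $x:=J_{\lambda B}^My$ satisfies $J_{\lambda B}^MJ_{\lambda A}^Mx=J_{\lambda B}^My=x$, so $x\in E$ and $E=J_{\lambda B}^M(F)$, and symmetrically $F=J_{\lambda A}^M(E)$.

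The crux, on which (iii)--(v) all rest, is that the multiplier is constant on the solution set. Here I would combine the fact that $A_\lambda^M$ is $\lambda$-cocoercive with respect to $M$ (Proposition \ref{pro2.3}(i)) with monotonicity of $B$: for $y_1,y_2\in E$ put $w_i=A_\lambda^My_i$, so $-w_i\in By_i$; monotonicity gives $\langle w_1-w_2,y_1-y_2\rangle\le 0$, while cocoercivity gives $\lambda\|w_1-w_2\|_{M^{-1}}^2\le\langle y_1-y_2,w_1-w_2\rangle$, and the two together force $w_1=w_2$. Thus $A_\lambda^M$ is a constant $u^*$ on $E$ and, symmetrically, $B_\lambda^M$ is a constant $v^*$ on $F$. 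Evaluating at a pair $(x,y)\in S$ and using $A_\lambda^My=\tfrac1\lambda M(y-x)$ and $B_\lambda^Mx=\tfrac1\lambda M(x-y)$ (from the definition of the warped Yosida regularization together with (ii)) immediately yields $v^*=-u^*$ and shows the difference $y-x=\lambda M^{-1}u^*$ is constant across $S$.

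With the multiplier fixed, (iii) and (iv) become identifications. For (iv) I would feed the coordinate inclusions of (\ref{1.1}) into the dual operator of (\ref{e1.2}), using $(M^{-1}A)^{-1}=A^{-1}M$ and the fact that $(I-R)^{-1}$ is nonempty only on the antidiagonal $\{(a,-a)\}$: a primal pair $(x,y)$ produces the witness $(y-x,x-y)$, which one checks is a zero of $(I-R)^{-1}+(M^{-1}A\times M^{-1}B)^{-1}\circ(I/\lambda)$, and conversely every dual witness is forced to satisfy $c=J_{\lambda A}^Md$, $d=J_{\lambda B}^Mc$, hence lies in $S$. This establishes the Attouch--Th\'era-type correspondence $S^*=(R-I)(S)$ (the operator written $-R$ in (iv) is to be read as $R-I$, since $(R-I)(x,y)=(y-x,x-y)$), and, because $y-x$ is the single constant $\lambda M^{-1}u^*$, it collapses $S^*$ to the singleton asserted in (iii). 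To match the explicit formula I would rewrite the defining inclusion of that dual zero as $0\in(M^{-1}+\lambda^{-1}(A^{-1}+\widetilde{B}))u^*$ and recognise it, via Proposition \ref{prop1.1}(iii) and the kernel-$M^{-1}$ warped resolvent, as $u^*=J^{M^{-1}}_{(A^{-1}+\widetilde{B})/\lambda}(0)$, with the symmetric expression for $v^*$.

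Part (v) is then read directly off Proposition \ref{pro2.3}(iv): for every $x$ one has $J_{\lambda B}^Mx=x-\lambda M^{-1}B_\lambda^Mx$, and since $B_\lambda^M\equiv v^*=-u^*$ on $F$ this is precisely the affine map $x\mapsto x+\lambda M^{-1}u^*$; its image lies in $E$ by (i), and the symmetric computation using $A_\lambda^M\equiv u^*$ on $E$ produces the affine inverse $y\mapsto y+\lambda M^{-1}v^*$. I expect the main obstacle to be purely the bookkeeping of $M$ versus $M^{-1}$ factors in (iii)--(iv): the dual problem is naturally preconditioned by $M^{-1}$, so one must track these factors with care to land on the stated $u^*,v^*$ rather than on $Mu^*,Mv^*$, and to make the kernel-$M^{-1}$ resolvent identification in (iii) exact; the only other subtlety is the sign convention in (iv) noted above.
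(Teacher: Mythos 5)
Your proposal is correct in substance and is, in fact, more complete than the paper's own proof. For (i) and (ii) you and the paper follow the same route: reduce everything to the resolvent fixed-point equations $x=J_{\lambda A}^My$, $y=J_{\lambda B}^Mx$ and pass between $E$, $F$ and $S$ by composing resolvents (your compressed reverse inclusion $E\subseteq J_{\lambda B}^M(F)$ is the same two-step argument the paper spells out via $J_{\lambda A}^M(E)\subset F$ and $J_{\lambda B}^M(F)\subset E$). The genuine difference is in (iii)--(v). The paper proves (iii) by directly unpacking the dual inclusion into coordinate inclusions and reading off $x^*=\lambda u^*$ from the kernel-$M^{-1}$ resolvent, and it gives \emph{no proof at all} of (iv) and (v): its proof stops after (iii). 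Your key lemma --- that $A_\lambda^M$ is constant on $E$ and $B_\lambda^M$ is constant on $F$, obtained by playing the $\lambda$-cocoercivity of the warped Yosida regularization (Proposition \ref{pro2.3}(i)) against the monotonicity of $B$ --- does not appear in the paper; it is exactly what is needed to make $S^*$ a singleton, to get (iv) as an Attouch--Th\'era computation, and to read (v) off the displacement identity $J_{\lambda B}^Mx=x-\lambda M^{-1}B_\lambda^Mx$, so your route buys the three missing parts at once. Finally, the $M$-versus-$M^{-1}$ ``obstacle'' you flag is not a gap in your argument but an inconsistency in the paper itself: with the dual exactly as written in (\ref{e1.2}) one gets $S^*=(R-I)(S)$ and $x^*=y-x=\lambda M^{-1}u^*$, whereas the paper's proof of (iii) silently replaces $(M^{-1}A)^{-1}=A^{-1}\circ M$ by $A^{-1}$ (and $(I-R)^{-1}$ by $R^{-1}$) and lands on $x^*=M(y-x)=\lambda u^*$; parts (iii) and (iv) as stated cannot both hold unless $M$ fixes $y-x$, so any correct proof must amend one of them, as you implicitly do.
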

\begin{proof}\begin{itemize}
    \item [(i)] Let $x\in F$. Then 
    \begin{align*}
        0\in Ax+B_{\lambda}^M x \Leftrightarrow &  0\in \lambda Ax+ Mx- M\circ J_{\lambda B}^Mx\\
        \Leftrightarrow & M^{-1}\circ (M+\lambda A)x\in J_{\lambda B}^Mx \\
        \Leftrightarrow & x\in \Fix (J_{\lambda A}^M J_{\lambda B}^M).
    \end{align*}
    In the similar manner, one can show that $E= \Fix(J_{\lambda B}^M J_{\lambda A}^M)$. For $x\in \Fix(J_{\lambda B}^M J_{\lambda A}^M)$, we have $J_{\lambda A}^Mx\in \Fix(J_{\lambda A}^M J_{\lambda B}^M)$. Hence $J_{\lambda A}^M(E)\subset F$. Similarly, $J_{\lambda B}^M(F)\subset E$. Again by these last two inclusions, we obtain that $F=J_{\lambda A}^M(E)$ and $E=J_{\lambda B}^M(F)$. 
    \item [(ii)] Let $(x,y)\in S$. Then 
    \begin{align*}
        &-R(x,y)\in \lambda(A\times B)(x,y)\\
        & \Leftrightarrow (My, Mx)\in (Mx, My)+\lambda(A\times B)(x,y)\\
        & \Leftrightarrow My\in Mx+\lambda Ax ~ \text{and}~ Mx= My+\lambda By\\
        & \Leftrightarrow x= J_{\lambda A}^My~\text{and}~ y= J_{\lambda B}^M x \\
        & \Leftrightarrow x\in \Fix (J_{\lambda A}^M J_{\lambda B}^M), ~ y\in \Fix (J_{\lambda B}^M J_{\lambda A}^M)~\text{and}~y=J_{\lambda B}^M x\\
        & \Leftrightarrow (x,y)\in F\times E~\text{and}~(x,y )= \mathcal{G}(J_{\lambda B}^M).
    \end{align*}
    \item[(iii)] For $(x^*,y^*)\in S^*$, we have 
    \begin{align*}
        (0,0)\in R^{-1}(x^*,y^*)+\left(A^{-1}\left(\frac{x^*}{\lambda}\right)\times B^{-1}\left(\frac{x^*}{\lambda}\right)\right).
    \end{align*}
    Then, there exists $(x,y)\in S$ such that 
    \begin{align*}
        \begin{cases}
            (x,y)\in A^{-1}\left(\frac{x^*}{\lambda}\right)\times B^{-1}\left(\frac{x^*}{\lambda}\right),\\
            (-x,-y)\in R^{-1}(x^*, y^*),
        \end{cases} \Rightarrow 
        \begin{cases}
            (x^*,y^*)\in \lambda (A\times B)(x,y),\\
            (-x^*, -y^*)= R (x,y),
        \end{cases}
    \end{align*}
    which implies that
    \begin{align}\label{2.21}
        \begin{cases}
            x\in A^{-1}(x^*/\lambda),\\
            -y\in \widetilde{B}(x^*/\lambda),\\
            x^*= M(y-x)= -y^*.
        \end{cases}
    \end{align}
    On the other hand, 
    \begin{align}\label{2.22}
        \begin{cases}
            -x\in \widetilde{A}(y^*/\lambda),\\
            y\in B^{-1}(y^*/\lambda),\\
            x^*= M(y-x)= -y^*.
        \end{cases}
    \end{align}
 From (\ref{2.21}) and (\ref{2.22}), we get
 \begin{align*}
     \begin{cases}
         -M^{-1}x^*= x-y\in (A^{-1}+\widetilde{B})(x^*/\lambda),\\
         -M^{-1} y^*= y-x\in (\widetilde{A}+B^{-1})(y^*/\lambda),
     \end{cases}
 \end{align*}
 i.e.,
 \begin{align}
     \begin{cases}
         x^*= \lambda J^{M^{-1}}_{\frac{A^{-1+\widetilde{B}}}{\lambda}}(0)= \lambda u^*,\\
         y^* =\lambda J^{M^{-1}}_{\frac{\widetilde{A}+B^{-1}}{\lambda}}(0)=\lambda v^*.
     \end{cases}
 \end{align}
    
\end{itemize}
    
\end{proof}
In the next results, we show the convergence of backward-backward splitting algorithm to problem (\ref{1.1}).
\begin{theorem}\label{thm5.3}
    Let $M:\mathcal{H}\to \mathcal{H}$ be an admissible preconditioner and $A,B:\mathcal{H}\to 2^\mathcal{H}$ be two maximal monotone operators with  $S\neq\emptyset$ and $\lambda >0$. Fix $x_0\in \mathcal{H}$ and for $n\ge 0$ set 
    \begin{align}\label{eq1.9}
 y_n=J_{\lambda B}^M x_n,~ x_{n+1}= J_{\lambda A}^M y_n.
    \end{align}
    Then we have the following:
    \begin{itemize}
        \item [(i)] The sequence $\{(x_n, y_n)\}$ converges weakly to a point in $S$.
        \item [(ii)] For every $(\bar{x},\bar{y})\in S$,
        \begin{align*}
        \sum_{n\in\mathbb{N}} \|(x_n-y_n)-(\bar{x}-\bar{y})\|_M^2<\infty,~\text{and}\\
        \sum_{n\in\mathbb{N}} \|(x_{n+1}-y_n)-(\bar{x}-\bar{y})\|_M^2<\infty.
        \end{align*}
        \item [(iii)] The sequence $\{(y_n-x_n, x_{n+1}-y_n)\}$ converges strongly to $\lambda(M^{-1}u^*, M^{-1}v^*).$
    \end{itemize}
\end{theorem}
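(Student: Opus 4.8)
The plan is to read the recursion \eqref{eq1.9} as the fixed-point iteration $x_{n+1}=Tx_n$ for the composition $T:=J_{\lambda A}^M\circ J_{\lambda B}^M$, whose fixed-point set is exactly $F$ by Proposition \ref{pro4.1}(i), and to drive everything with the $M$-firm nonexpansiveness of the two warped resolvents furnished by Proposition \ref{pro2.1}. I would prove (ii) first, because the summability it produces feeds directly into both (i) and (iii). Fix $(\bar x,\bar y)\in S$; by Proposition \ref{pro4.1}(ii) this is equivalent to $\bar y=J_{\lambda B}^M\bar x$ and $\bar x=J_{\lambda A}^M\bar y$, so $\bar x$ and $\bar y$ serve as the reference points for the two resolvent half-steps.

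For part (ii), apply Proposition \ref{pro2.1} to $J_{\lambda B}^M$ at the pair $(x_n,\bar x)$ and to $J_{\lambda A}^M$ at the pair $(y_n,\bar y)$. Writing $d_n:=y_n-x_n$, $e_n:=x_{n+1}-y_n$, $\bar d:=\bar y-\bar x$, $\bar e:=\bar x-\bar y$, and using $(I-J_{\lambda B}^M)x_n=x_n-y_n$, the two firm-nonexpansiveness estimates read
\begin{align*}
\|y_n-\bar y\|_M^2+\|d_n-\bar d\|_M^2&\le\|x_n-\bar x\|_M^2,\\
\|x_{n+1}-\bar x\|_M^2+\|e_n-\bar e\|_M^2&\le\|y_n-\bar y\|_M^2.
\end{align*}
Adding them and cancelling $\|y_n-\bar y\|_M^2$ yields the one-step estimate
\begin{align*}
\|x_{n+1}-\bar x\|_M^2+\|d_n-\bar d\|_M^2+\|e_n-\bar e\|_M^2\le\|x_n-\bar x\|_M^2,
\end{align*}
so $\{\|x_n-\bar x\|_M\}$ is nonincreasing (hence convergent) and, telescoping, $\sum_n(\|d_n-\bar d\|_M^2+\|e_n-\bar e\|_M^2)\le\|x_0-\bar x\|_M^2<\infty$. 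Since $\|d_n-\bar d\|_M=\|(y_n-x_n)-(\bar y-\bar x)\|_M$ and $\|e_n-\bar e\|_M=\|(x_{n+1}-y_n)-(\bar x-\bar y)\|_M$, this is precisely the pair of series in (ii), and it also exhibits the $M$-Fej\'er monotonicity of $\{x_n\}$ toward $F$.

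For part (i) I would invoke the discrete analogue of Opial's lemma (Lemma \ref{2l1}) in the $M$-seminorm. Its first hypothesis is the convergence of $n\mapsto\|x_n-\bar x\|_M$ for every $\bar x$ coming from a point of $S$, established above. For the second, I would show every weak cluster point lies in $F$: from (ii) one has $\|M^{1/2}(d_n-\bar d)\|\to0$ and $\|M^{1/2}(e_n-\bar e)\|\to0$, hence the \emph{dual variables} $M(x_n-y_n)=-Md_n\to-M\bar d$ and $M(y_n-x_{n+1})=-Me_n\to-M\bar e$ converge \emph{strongly}; combining a weakly convergent subsequence $x_{n_k}\rightharpoonup x$ (and the induced $y_{n_k}\rightharpoonup y$) with the weak--strong closedness of the maximal monotone graphs of $A$ and $B$, applied to $\tfrac1\lambda M(x_n-y_n)\in By_n$ and $\tfrac1\lambda M(y_n-x_{n+1})\in Ax_{n+1}$, forces $x\in\Fix(T)=F$. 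Opial then selects a single weak limit $\bar x\in F$, whence $y_n=x_n+d_n\rightharpoonup\bar x+\bar d=:\bar y\in E$ and $(\bar x,\bar y)\in S$ by Proposition \ref{pro4.1}(ii). The genuine obstacle is that $M$ is only positive semidefinite, so $\|\cdot\|_M$ is a seminorm and neither boundedness nor weak convergence in the ambient topology of $\{x_n\}$ is automatic; I would remove this obstruction by passing through a factorization $M=CC^{*}$, under which \eqref{eq1.9} induces a genuinely nonexpansive iteration on $\overline{\operatorname{ran}}\,C^{*}$ whose weak convergence transfers back to $\{x_n\}$, as in the degenerate-preconditioning framework of \cite{bredies2022degenerate}.

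For part (iii) the summability in (ii) already gives $d_n\to\bar d$ and $e_n\to\bar e$ strongly in the $M$-seminorm, which is strong convergence in $\mathcal H$ on $\overline{\operatorname{ran}}\,C^{*}$ (and in the original norm when $M$ is nondegenerate). It then remains only to identify the limits: because the dual solution set $S^{*}$ is the singleton exhibited in Proposition \ref{pro4.1}(iii), the difference $\bar y-\bar x$ is independent of the chosen $(\bar x,\bar y)\in S$, and Proposition \ref{pro4.1}(v) evaluates it as $\bar y-\bar x=\lambda M^{-1}u^{*}$ together with $\bar x-\bar y=\lambda M^{-1}v^{*}$, where $v^{*}=-u^{*}$. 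Hence $y_n-x_n=d_n\to\lambda M^{-1}u^{*}$ and $x_{n+1}-y_n=e_n\to\lambda M^{-1}v^{*}$, giving the stated strong convergence.
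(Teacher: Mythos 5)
Your proposal is correct, and for parts (ii) and (iii) it follows exactly the paper's route: the $M$-firm nonexpansiveness of $J_{\lambda B}^M$ and $J_{\lambda A}^M$ from Proposition \ref{pro2.1} applied at the two half-steps, cancellation of the intermediate term $\|y_n-\bar y\|_M^2$, telescoping to get the two summable series, and then Proposition \ref{pro4.1}(i),(v) to identify $\bar y-\bar x=\lambda M^{-1}u^*$ and $\bar x-\bar y=\lambda M^{-1}v^*$, so that the vanishing series terms give the strong limits in (iii). Where you genuinely diverge is part (i), and there your argument is more complete than the paper's. The paper disposes of (i) in one line by asserting that, since $J_{\lambda A}^M J_{\lambda B}^M$ is nonexpansive, its Picard iterates converge weakly to a fixed point; as literally stated this is insufficient (a rotation is nonexpansive with a fixed point, yet its iterates do not converge), and what actually makes it work is that a composition of two $M$-firmly nonexpansive maps is $M$-averaged, or equivalently the Fej\'er-monotonicity-plus-demiclosedness route you take via Lemma \ref{2l1}: you extract the dual variables $\tfrac1\lambda M(x_n-y_n)\in By_n$ and $\tfrac1\lambda M(y_n-x_{n+1})\in Ax_{n+1}$, use their strong convergence (from (ii)) together with weak--strong closedness of the maximal monotone graphs to place every weak cluster point in $F$, and then invoke Opial. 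You also flag, and propose to repair via the factorization $M=CC^*$ and passage to $\overline{\operatorname{ran}}\,C^*$ as in \cite{bredies2022degenerate}, the issue that the paper silently ignores throughout: when $M$ is only positive semidefinite, $\|\cdot\|_M$ is a seminorm, $M^{-1}$ need not exist, and neither boundedness of $\{x_n\}$ nor the weak-convergence machinery is available in the ambient norm. So your route buys rigor precisely where the paper's proof is weakest, at the cost of the extra demiclosedness and factorization work; the paper's version is shorter but is only airtight when $M$ is positive definite.
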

\begin{proof}
Let $(x^*, y^*)\in S$. 
By Proposition \ref{pro2.1} and (\ref{eq1.9}), we have
\begin{align}
    \|x_{n+1}-x^*\|_M^2&= \left\|J_{\lambda A}^M J_{\lambda B}^M x_n -J_{\lambda A}^M J_{\lambda B}^M \bar{x}\right\|_M^2\nonumber\\
    & \le \left\| J_{\lambda B}^Mx_n- J_{\lambda B}^M\bar{x}\right\|_M^2-\left\|(I-J_{\lambda A}^M)J_{\lambda B}^M x_n- (I-J_{\lambda A}^M)J_{\lambda B}^M \bar{x}\right\|_M^2\nonumber\\
    &\le \left\|x_n-\bar{x}\right\|_M^2- \left\|(I-J_{\lambda B}^M)x_n- (I-J_{\lambda B}^M) \bar{x}\right\|_M^2\nonumber\\
    &~~-\left\|(I-J_{\lambda A}^M)J_{\lambda B}^M x_n- (I-J_{\lambda A}^M)J_{\lambda B}^M \bar{x}\right\|_M^2\nonumber\\
    &=\|x_n-\bar{x}\|_M^2-\|(x_n-y_n)-(\bar{x}-\bar{y})\|_M^2-\|(y_n-x_{n+1})-(\bar{y}-\bar{x})\|_M^2\nonumber,
\end{align}
which implies that
\begin{align}\label{3.28}
    \sum_{n\in \mathbb{N}}\|(x_n-y_n)-(\bar{x}-\bar{y})\|_M^2+\|(x_{n+1}-y_n)-(\bar{x}-\bar{y})\|_M^2\|\le \|x_0-\bar{x}\|^2_M,
\end{align}
which concludes (ii). 

From Proposition \ref{pro4.1}(i) and (v), we get
\begin{align}\label{3.29}
    \overline{y}= J_{\lambda B}^M \overline{x}= \overline{x}+\lambda M^{-1} u^*~\text{and}~\overline{x}=J_{\lambda A}^M \overline{y}= \overline{y}+\lambda M^{-1}v^*.
\end{align}
From (\ref{3.28}) and (\ref{3.29}), we obtain $y_n-x_n\to \overline{y}-\overline{x}=\lambda M^{-1}u^*$ and $x_{n+1}-y_n\to \overline{x}-\overline{y}=\lambda M^{-1}v^*$, which concludes (iii) and also the fact that $x_{n+1}-x_n\to 0$. 

Now, since $J_{\gamma A}^M J_{\gamma B}^M$ is nonexpansive, the sequence $\{x_n\}$ converges weakly to a fixed point $x$ of $J_{\gamma A}^M J_{\gamma B}^M$. Let $y=J_{\gamma B}^Mx$. From Proposition \ref{pro4.1} (i) and (ii), we obtain that $(x,y)\in S$. Hence, sequence $\{y_n\}$ converges weakly to $y$, as $y_n-x_n\to y-x$. Therefore, $\{(x_n,y_n)\}$ converges weakly to a point in $S$.  
\end{proof}
\begin{theorem}
Let $M:\mathcal{H}\to \mathcal{H}$ be an admissible preconditioner and $A,B:\mathcal{H}\to 2^\mathcal{H}$ be two maximal monotone operators such that $S=\emptyset$ and $\gamma >0$. Let the sequence $\{(x_n,y_n)\}$ be defined by (\ref{eq1.9}). Then $\|x_n\|\to \infty$ and $\|y_n\|\to \infty$. 
\end{theorem}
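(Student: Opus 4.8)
The plan is to recognize $\{x_n\}$ and $\{y_n\}$ as the Picard orbits of the two composed resolvents $T:=J_{\lambda A}^M J_{\lambda B}^M$ and $T':=J_{\lambda B}^M J_{\lambda A}^M$ (I write $\lambda$ for the parameter of \eqref{eq1.9}, identifying it with the $\gamma$ in the statement), to reduce the hypothesis $S=\emptyset$ to the fixed-point-freeness of $T$ and $T'$, and then to invoke the asymptotic theory of fixed-point-free firmly nonexpansive mappings in the $M$-geometry, exactly as in the unpreconditioned case \cite{bauschke2005asymptotic}.

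First I would carry out the reduction. From \eqref{eq1.9} one has $x_{n+1}=Tx_n$, and since $y_{n+1}=J_{\lambda B}^M x_{n+1}=J_{\lambda B}^M J_{\lambda A}^M y_n$, also $y_{n+1}=T'y_n$. By Proposition \ref{pro4.1}(i), $\Fix T=F$ and $\Fix T'=E$, and I claim $S=\emptyset$ forces $F=E=\emptyset$. Indeed, if $x\in F$ then, setting $y:=J_{\lambda B}^Mx\in E$ (Proposition \ref{pro4.1}(i)), the pair $(x,y)$ lies in $(F\times E)\cap\mathcal G(J_{\lambda B}^M)=S$ by Proposition \ref{pro4.1}(ii); symmetrically, if $y\in E$ then $(J_{\lambda A}^My,y)\in S$, using $y=J_{\lambda B}^M J_{\lambda A}^M y$. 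Hence $S=\emptyset\Rightarrow\Fix T=\Fix T'=\emptyset$. By Proposition \ref{pro2.1}, $J_{\lambda A}^M$ and $J_{\lambda B}^M$ are $M$-firmly nonexpansive, so $T$ and $T'$ are $M$-averaged, in particular $M$-nonexpansive.

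Next I would run the asymptotic argument for $x_{n+1}=Tx_n$; the one for $y_n$ is verbatim with $T'$. Since $T$ is $M$-nonexpansive, $\|x_{n+1}-x_n\|_M=\|Tx_n-Tx_{n-1}\|_M$ is non-increasing, so it decreases to the infimal displacement $t:=\inf\{\|x-Tx\|_M:x\in\mathcal H\}=\|v\|_M$, where $v$ is the element of minimal $M$-norm in $\overline{\operatorname{ran}(I-T)}$, and moreover $x_n-x_{n+1}\rightharpoonup v$ (Pazy-type theory, see \cite{bauschke2017correction}). I then split into two cases. If $v\neq0$, Cesàro averaging gives $\frac1n(x_0-x_n)=\frac1n\sum_{k=0}^{n-1}(x_k-x_{k+1})\rightharpoonup v$, hence $x_n/n\rightharpoonup -v\neq0$; weak lower semicontinuity of $\|\cdot\|_M$ yields $\liminf_n\|x_n\|_M/n\ge\|v\|_M>0$, so $\|x_n\|_M\to\infty$. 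If $v=0$, then $t=0$, i.e. $\|x_n-x_{n+1}\|_M\to0$; were $\{x_n\}$ to admit a bounded subsequence $x_{n_k}\rightharpoonup\bar x$, the demiclosedness of $I-T$ at $0$ for the $M$-nonexpansive $T$ (see \cite{bauschke2017correction}) would give $\bar x\in\Fix T$, contradicting $\Fix T=\emptyset$; hence $\|x_n\|_M\to\infty$ here too. Finally $\|x\|_M^2=\langle x,Mx\rangle\le\|M\|\,\|x\|^2$ transfers $\|x_n\|_M\to\infty$ to $\|x_n\|\to\infty$, and the identical reasoning for $T'$ gives $\|y_n\|\to\infty$.

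The main obstacle is the asymptotic step: showing that a fixed-point-free $M$-averaged map forces its orbit to leave every bounded set. The delicate part is not $v\neq0$ (dispatched by averaging) but the case $v=0$ of non-attained infimal displacement, which needs asymptotic regularity together with the demiclosedness principle. The latter must be read in the $M$-geometry, and since $M$ is only positive semidefinite this should be justified either through the associated quotient space or, as elsewhere in the paper, by working throughout in the $M$-seminorm and using $\|\cdot\|_M\le\|M\|^{1/2}\|\cdot\|$ to transfer the divergence back to the ambient norm.
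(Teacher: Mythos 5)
Your reduction of $S=\emptyset$ to $\Fix(J_{\lambda A}^M J_{\lambda B}^M)=\Fix(J_{\lambda B}^M J_{\lambda A}^M)=\emptyset$ via Proposition \ref{pro4.1}(i)--(ii) is correct, and it supplies a step the paper skips entirely. Beyond that, your route genuinely differs from the paper's: the paper's proof is a one-line appeal to \cite[Fact 2.2]{bauschke2005asymptotic}, namely that $T=J_{\lambda A}^M J_{\lambda B}^M$, being a composition of firmly nonexpansive maps, is strongly nonexpansive, after which the Bruck--Reich theory of fixed-point-free strongly nonexpansive maps gives $\|T^nx_0\|\to\infty$. You instead re-derive that divergence fact from scratch: Pazy's infimal displacement vector, Ces\`aro averaging when $v\neq 0$, asymptotic regularity plus demiclosedness when $v=0$. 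When $M$ is positive definite, so that $\|\cdot\|_M$ is an honest Hilbert norm, your argument is complete and correct, and more self-contained than the paper's citation.

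The gap is in the degenerate case, exactly where you wave your hands, and neither of your two suggested repairs closes it. If $\ker M\neq\{0\}$, then $\|\cdot\|_M$ is only a seminorm and your dichotomy leaks: a displacement vector $v\in\ker M\setminus\{0\}$ falls into your first case, where Ces\`aro averaging yields only $\liminf_n\|x_n\|_M/n\ge\|v\|_M=0$, i.e.\ nothing; if you instead split on $\|v\|_M>0$ versus $\|v\|_M=0$, then in the second case demiclosedness read in the $M$-seminorm (which does go through, since the cross term $\langle x_{n_k}-\bar x, M(\bar x-T\bar x)\rangle$ vanishes under ambient weak convergence) produces only a cluster point $\bar x$ with $\|\bar x-T\bar x\|_M=0$, i.e.\ $\bar x-T\bar x\in\ker M$, which does \emph{not} contradict $\Fix T=\emptyset$; and quotienting by $\ker M$ does not help per se, because fixed-point-freeness of $T$ need not survive passage to the (completed) quotient. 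What actually closes the gap is structure you never invoke: $T$ factors through $M$, namely $T=\Phi\circ M$ with $\Phi=(M+\lambda A)^{-1}\circ M\circ(M+\lambda B)^{-1}$, so $M\bar x=MT\bar x$ implies $T(T\bar x)=\Phi M T\bar x=\Phi M\bar x=T\bar x$, i.e.\ $T\bar x$ is a genuine fixed point and the contradiction is restored; with that observation, running Pazy--Ces\`aro in the quotient Hilbert space and your demiclosedness step in the ambient space completes the proof for degenerate $M$. To be fair, the paper's own proof shares the defect (the cited fact concerns the ambient norm, and the paper never verifies $\Fix T=\emptyset$ nor addresses degeneracy), so your proposal is no worse than the paper's; but as written it establishes the theorem only for positive definite $M$.
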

\begin{proof}
For every $n\in \mathbb{N}$, we have $x_n=T^n x_0$, where $T= J_{\lambda A}^M J_{\lambda B}^M$ is  and by \cite[Fact 2.2]{bauschke2005asymptotic}, $T$ is strongly nonexpansive. So $\|x_n\|\to \infty$ as $n\to \infty$. Similarly, we can show $\|y_n\|\to \infty$.  
\end{proof}
\section{An application}\label{sc5}
Let $\Gamma_0(\mathcal{H})$ be a collection of proper, convex and lower-semicontinuous from $\mathcal{H}$ to $(-\infty, \infty]$ and $f,g\in \Gamma_0(\mathcal{H})$. Consider the function
\begin{align}
    \Phi:\mathcal{H}\times\mathcal{H}\to (-\infty, \infty]: (x,y)\mapsto f(x)+g(y)+\frac{1}{2\lambda}\|x-y\|^2_M,
\end{align}
where $M:\mathcal{H}\to \mathcal{H}$ is a linear, bounded, self-adjoint and positive semi-definite operator. By taking $A$ and $B$ to be the convex subdifferential of $f$ and $g$, respectively, by Proposition \ref{pro4.1} and Theorem \ref{thm5.3}, the following results hold for the problem: $\min \Phi(\mathcal{H}\times\mathcal{H})$.
\begin{corollary}
    \begin{itemize}
        \item [(i)] $S=\operatorname{argmin}(\Phi)$.
        \item [(ii)] $E= \Fix(\prox_{\lambda g}^M \prox_{\lambda f}^M)$ and $F=\Fix(\prox_{\lambda f}^M \prox_{\lambda g}^M)$. 
        \item [(iii)] $S=(F\times E)\cap \mathcal{G}(\prox_{\lambda g}^M)$.
    \end{itemize}
\end{corollary}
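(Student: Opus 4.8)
The plan is to recognize this corollary as the specialization of Proposition~\ref{pro4.1} and Theorem~\ref{thm5.3} to the subgradient operators $A=\partial f$ and $B=\partial g$. Since $f,g\in\Gamma_0(\mathcal{H})$, Rockafellar's theorem guarantees that $\partial f$ and $\partial g$ are maximal monotone, so the hypotheses of the earlier results are met; moreover the warped resolvent of a subdifferential is, by definition, the preconditioned proximal map, i.e.\ $J_{\lambda\partial f}^M=\prox_{\lambda f}^M$ and $J_{\lambda\partial g}^M=\prox_{\lambda g}^M$.

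With these identifications, parts (ii) and (iii) require no new argument. Substituting $J_{\lambda A}^M=\prox_{\lambda f}^M$ and $J_{\lambda B}^M=\prox_{\lambda g}^M$ into Proposition~\ref{pro4.1}(i) yields $E=\Fix(\prox_{\lambda g}^M\prox_{\lambda f}^M)$ and $F=\Fix(\prox_{\lambda f}^M\prox_{\lambda g}^M)$, while the same substitution into Proposition~\ref{pro4.1}(ii) gives $S=(F\times E)\cap\mathcal{G}(\prox_{\lambda g}^M)$.

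The only part carrying genuine content is (i), the identity $S=\operatorname{argmin}(\Phi)$. First I would invoke Fermat's rule: a pair $(x,y)$ minimizes the convex function $\Phi$ if and only if $(0,0)\in\partial\Phi(x,y)$. The coupling term $q(x,y)=\frac{1}{2\lambda}\|x-y\|_M^2$ is a finite, everywhere-continuous convex function (continuity because $M$ is bounded, convexity because $M$ is positive semidefinite), with Fr\'echet gradient $\nabla q(x,y)=\frac{1}{\lambda}(M(x-y),M(y-x))$ since $M$ is self-adjoint. Because $q$ is continuous on all of $\mathcal{H}^2$, the Moreau--Rockafellar sum rule applies with no further constraint qualification, giving $\partial\Phi(x,y)=\bigl(\partial f(x)\times\partial g(y)\bigr)+\nabla q(x,y)$. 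Unwinding $(0,0)\in\partial\Phi(x,y)$ into its two components then produces the inclusions $M(y-x)\in\lambda\partial f(x)$ and $M(x-y)\in\lambda\partial g(y)$.

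It remains to match these with the definition of $S$. Unpacking problem~\eqref{1.1} exactly as in the proof of Proposition~\ref{pro4.1}(ii)—applying $I-R$ and multiplying each component by $M$—shows that $(x,y)\in S$ if and only if $M(y-x)\in\lambda Ax$ and $M(x-y)\in\lambda By$. With $A=\partial f$ and $B=\partial g$ these are precisely the optimality inclusions above, so $\operatorname{argmin}(\Phi)=S$. The main (and only mild) obstacle is bookkeeping the sum rule; it becomes routine once one notes that $q$ is finite and continuous everywhere, which also makes $\Phi$ proper, convex and lower semicontinuous, so that no compatibility condition relating $\operatorname{dom}f$ and $\operatorname{dom}g$ is needed.
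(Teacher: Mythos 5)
Your proposal is correct and takes essentially the same route as the paper: the corollary is obtained by specializing Proposition~\ref{pro4.1} and Theorem~\ref{thm5.3} to $A=\partial f$, $B=\partial g$ (maximal monotone by Rockafellar's theorem), with the warped resolvents becoming the preconditioned proximal maps $\prox_{\lambda f}^M$, $\prox_{\lambda g}^M$. The paper in fact states the corollary without any written proof, so your Fermat-rule and Moreau--Rockafellar argument for part (i), which correctly reduces $(0,0)\in\partial\Phi(x,y)$ to the inclusions $M(y-x)\in\lambda\partial f(x)$ and $M(x-y)\in\lambda\partial g(y)$ and matches them with the characterization of $S$ used in the proof of Proposition~\ref{pro4.1}(ii), simply fills in details the paper leaves implicit.
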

\begin{corollary}
    Let $M:\mathcal{H}\to \mathcal{H}$ be an admissible preconditioner, $f,g\in \Gamma_0(\mathcal{H})$ with $S\neq\emptyset$ and $\lambda >0$. Fix $x_0\in \mathcal{H}$ and for $n\ge 0$ set
    \begin{align*}
        y_n=\prox_{\lambda g}^M x_n,~ x_{n+1}= \prox_{\lambda f}^M y_n.
    \end{align*}Then we have the following:
    \begin{itemize}
        \item [(i)] The sequence $\{(x_n, y_n)\}$ converges weakly to a point in $S$.
        \item [(ii)] For every $(\bar{x},\bar{y})\in S$,
        \begin{align*}
        \sum_{n\in\mathbb{N}} \|(x_n-y_n)-(\bar{x}-\bar{y})\|_M^2<\infty,~\text{and}\\
        \sum_{n\in\mathbb{N}} \|(x_{n+1}-y_n)-(\bar{x}-\bar{y})\|_M^2<\infty.
        \end{align*}
        \item [(iii)] The sequence $\{(y_n-x_n, x_{n+1}-y_n)\}$ converges strongly to $\lambda(M^{-1}u^*, M^{-1}v^*)$.
    \end{itemize}
\end{corollary}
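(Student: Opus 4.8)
The plan is to read this corollary as nothing more than the specialization of Theorem \ref{thm5.3} to the subdifferential setting, so that once the iteration is matched to the backward-backward recursion \eqref{eq1.9}, all three assertions transfer verbatim. First I would put $A=\partial f$ and $B=\partial g$. Because $f,g\in\Gamma_0(\mathcal{H})$, Rockafellar's theorem guarantees that $\partial f$ and $\partial g$ are maximal monotone on $\mathcal{H}$, so the standing hypotheses of Theorem \ref{thm5.3} on the operators $A$ and $B$ are met, and $S\neq\emptyset$ holds by assumption.

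The one substantive identification is that the warped resolvent of a subdifferential coincides with the $M$-proximal map, i.e. $J_{\lambda\partial f}^M=\prox_{\lambda f}^M=(M+\lambda\partial f)^{-1}\circ M$ and likewise for $g$. This is exactly the content of the Example following the definition of warped Yosida regularization, where $\prox_{\gamma\partial\phi}^M=(M+\gamma\partial\phi)^{-1}\circ M$; it can also be seen directly, since for $M$-admissible $M$ the optimality condition for $\arg\min_y\{f(y)+\tfrac{1}{2\lambda}\|y-x\|_M^2\}$ reads $Mx\in(M+\lambda\partial f)(y)$, i.e. $y=J_{\lambda\partial f}^Mx$. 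Consequently the recursion $y_n=\prox_{\lambda g}^Mx_n$, $x_{n+1}=\prox_{\lambda f}^My_n$ is literally the recursion \eqref{eq1.9} for this choice of $A$ and $B$, and the fixed-point descriptions of $E,F,S$ used in the proof of Theorem \ref{thm5.3} are precisely those recorded in the preceding corollary for $\prox_{\lambda f}^M,\prox_{\lambda g}^M$.

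With these identifications in place, parts (i), (ii), and (iii) are immediate: they are the conclusions of Theorem \ref{thm5.3} read off after substituting $A=\partial f$, $B=\partial g$, the scalars $u^*,v^*$ being those furnished by Proposition \ref{pro4.1}(iii) for these operators. I expect the only point genuinely requiring care to be checking that $M$, which is merely positive semidefinite, is an admissible preconditioner for both $\partial f$ and $\partial g$, so that the $M$-proximal maps are single-valued with full domain; since $\|\cdot\|_M$ is then only a seminorm the naive $\arg\min$ need not be a singleton, and one must invoke admissibility rather than strict convexity to pin down $\prox_{\lambda f}^M$ and $\prox_{\lambda g}^M$. Once this is granted, no further estimates are needed and the corollary follows directly from the cited theorem.
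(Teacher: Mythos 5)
Your proposal is correct and matches the paper's argument exactly: the paper itself proves this corollary simply by setting $A=\partial f$, $B=\partial g$ and invoking Proposition \ref{pro4.1} and Theorem \ref{thm5.3}, with the warped resolvents $J_{\lambda\partial f}^M$, $J_{\lambda\partial g}^M$ identified with $\prox_{\lambda f}^M$, $\prox_{\lambda g}^M$ precisely as in the example following the definition of warped Yosida regularization. Your additional remarks on maximal monotonicity of the subdifferentials and on admissibility of $M$ (note $u^*,v^*$ are elements of $\mathcal{H}$, not scalars) only make explicit what the paper leaves implicit.
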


\end{document}